\newtheorem{thm}{Theorem}[section]
\newtheorem{prop}[thm]{Proposition}
\newtheorem{defn}[thm]{Definition}
\newtheorem{rem}[thm]{Remark}
\theoremstyle{definition}
\numberwithin{equation}{section}
\renewcommand{\Re}{\hbox{Re}\,}
\renewcommand{\Im}{\hbox{Im}\,}
\newcommand{\C}{\mathbb{C}}
\newcommand{\R}{\mathbb{R}}
\newcommand{\supp}{\operatorname{supp}}
\def\hat{\widehat}
\def\tilde{\widetilde}
\def \bfo {\begin {eqnarray*} }
\def \efo {\end {eqnarray*} }
\def \ba {\begin {eqnarray*} }
\def \ea {\end {eqnarray*} }
\def \beq {\begin {eqnarray}}
\def \eeq {\end {eqnarray}}
\def \supp {\hbox{supp }}
\def \p {\partial}
\def\hat{\widehat}
\def\tilde{\widetilde}
\def \bfo {\begin {eqnarray*} }
\def \efo {\end {eqnarray*} }
\def \ba {\begin {eqnarray*} }
\def \ea {\end {eqnarray*} }
\def \beq {\begin {eqnarray}}
\def \eeq {\end {eqnarray}}
\def \supp {\hbox{supp }}
\def \p {\partial}
\begin{document}

 \title[Reconstruction on  transversally anisotropic manifolds]{Reconstruction in the Calder\'on problem on conformally transversally anisotropic manifolds}

\author[Feizmohammadi]{Ali Feizmohammadi}

\address
        {Ali Feizmohammadi, Department of Mathematics\\
 University College London, London, \\
 UK-WC1E 6BT, United Kingdom}

\email{a.feizmohammadi@ucl.ac.uk}

\author[Krupchyk]{Katya Krupchyk}

\address
        {K. Krupchyk, Department of Mathematics\\
University of California, Irvine\\
CA 92697-3875, USA }

\email{katya.krupchyk@uci.edu}

\author[Oksanen]{Lauri Oksanen}

\address
        {Lauri Oksanen, Department of Mathematics\\
 University College London, London, \\
 UK-WC1E 6BT, United Kingdom}

\email{l.oksanen@ucl.ac.uk}

\author[Uhlmann]{Gunther Uhlmann}

\address
       {G. Uhlmann, Department of Mathematics\\
       University of Washington\\
       Seattle, WA  98195-4350\\
       USA\\
        and Institute for Advanced Study of the Hong Kong University of Science and Technology}
\email{gunther@math.washington.edu}

\maketitle

\begin{abstract} 
We show that a continuous potential $q$ can be constructively determined  from the knowledge of the Dirichlet--to--Neumann map for the Schr\"odinger operator $-\Delta_g+q$ on a conformally transversally anisotropic manifold of dimension $\geq 3$, provided that the geodesic ray transform on the transversal manifold is constructively invertible. This is a constructive counterpart of the uniqueness result of \cite{DKurylevLS_2016}.  A crucial role in our reconstruction procedure is played by a constructive determination of the boundary traces of suitable complex geometric optics solutions based on Gaussian beams quasimodes concentrated along non-tangential geodesics on the transversal manifold, which enjoy uniqueness properties. This is achieved by applying the simplified version of the approach of \cite{Nachman_Street_2010} to our setting. We also identify the main space introduced in \cite{Nachman_Street_2010} with a standard Sobolev space on the boundary of the manifold.  Another ingredient in the proof of our result is a reconstruction formula for the boundary trace of a continuous potential  from the knowledge of the Dirichlet--to--Neumann map. 
\end{abstract}

\section{Introduction and statement of results}

Let $(M,g)$ be a smooth compact oriented Riemannian manifold of dimension $n\ge 3$ with smooth boundary $\p M$. Let us consider the Dirichlet problem for the Laplace--Beltrami operator $-\Delta_g=-\Delta$, 
\begin{equation}
\label{eq_int_1}
\begin{cases}
-\Delta u=0 & \text{in}\quad M^{\text{int}},\\
u|_{\p M}=f. 
\end{cases}
\end{equation}
Here and in what follows $M^{\text{int}}=M\setminus \p M$.
For any $f\in  H^{\frac{1}{2}}(\p M)$, the problem \eqref{eq_int_1} has a unique solution $u\in H^1(M^{\text{int}})$.  Associated to \eqref{eq_int_1}, we define the Dirichlet--to--Neumann map $\Lambda_{g,0}: H^{\frac{1}{2}}(\p M)\to H^{-\frac{1}{2}}(\p M)$, formally given by $\Lambda_{g,0} f=\p_\nu u|_{\p M}$, where $\p_\nu$ is the unit outer normal to $\p M$. 
If $\psi: M\to M$ is a diffeomorphism satisfying $\psi|_{\p M}=I$ then $\Lambda_{\psi^*g, 0}=\Lambda_{g,0}$, see \cite{Lee_Uhlmann_1989}. 

The anisotropic Calder\'on problem concerns the question of whether the equality $\Lambda_{g_1,0}=\Lambda_{g_2,0}$ implies that $g_2=\psi^* g_1$ where $\psi: M\to M$ is a diffeomorphism such that  $\psi|_{\p M}=I$. This problem is solved for real-analytic metrics in \cite{Lee_Uhlmann_1989},  \cite{Lassas_Uhlmann_2001}, \cite{Lassas_Taylor_Uhlmann_2003}, see also \cite{Guillarmou_Sa_Barreto_2009}, while it remains open in the smooth category, in dimensions $n\ge 3$. The corresponding two dimensional problem, with an additional obstruction arising from the conformal invariance of the Laplacian, is settled in \cite{Lassas_Uhlmann_2001}. 
 
A powerful method for studying the anisotropic Calder\'on problem on genuinely non-analytic manifolds, where the metric is of special form, is introduced in the work \cite{DKSaloU_2009}. The method is based on the technique of Carleman estimates with limiting Carleman weights. The notion of a limiting Carleman weight for the Laplacian was introduced and applied to the Calder\'on problem in the Euclidean setting in \cite{Kenig_Sjostrand_Uhlmann}. An important result of  \cite{DKSaloU_2009} states that on a simply connected open manifold, the existence of a limiting Carleman weight is equivalent to the existence of a parallel unit vector field for a conformal multiple of the metric. Locally, the latter condition is equivalent to the fact that the manifold is conformal to the product of a Euclidean interval and some Riemannian manifold of dimension $n-1$. Following \cite{DKSaloU_2009}, \cite{DKurylevLS_2016},  we have the following definitions.
\begin{defn}
Let $(M,g)$ be a smooth compact oriented Riemannian manifold of dimension $n\ge 3$ with smooth boundary $\p M$.
\begin{itemize}
\item[(i)] $(M,g)$ is called transversally anisotropic if $(M,g)\subset \subset (T^{\text{int}},g)$ where $T=\R\times M_0$, $g=e\oplus g_0$, $(\R,e)$ is the Euclidean line, and $(M_0,g_0)$ is a compact $(n-1)$--dimensional manifold with boundary, called the transversal manifold. 

\item[(ii)] $(M,g)$ is called conformally transversally anisotropic (CTA) if $(M,cg)$ is transversally anisotropic, for some smooth positive function $c$. 

\item[(iii)] $(M,g)$ is called admissible if $(M,g)$ is CTA and the transversal manifold $(M_0,g_0)$ is simple, meaning that 
for any $p\in M_0$, the exponential map $\exp_p$ with its maximal domain of definition in $T_p M_0$ is a diffeomorphism onto $M_0$, and $\p M_0$ is strictly convex.
\end{itemize}
\end{defn}

An interesting special case of the anisotropic Calder\'on problem is such a problem in a fixed conformal class. Since any conformal diffeomorphism fixing the boundary must be the identity map, there is no obstruction to uniqueness arising from isometries in this problem, see \cite{Lionheart_1997}.  The uniqueness for the anisotropic Calder\'on problem in a fixed conformal class was obtained in  \cite{DKSaloU_2009}  in the case of admissible manifolds. Thanks to the simplicity of the transversal manifold, the proof relies on a construction of complex geometric optics solutions by means of a global WKB method, and the injectivity of the attenuated geodesic ray transform on simple manifolds. A reconstruction procedure for the uniqueness result of \cite{DKSaloU_2009} was given in \cite{Kenig_Salo_Uhlmann_2011}. 

Dropping the simplicity assumption on the transversal manifold, the anisotropic Calder\'on problem in a fixed conformal class on a general CTA manifold was studied in \cite{DKurylevLS_2016}. Here the global uniqueness was established under the assumption that the geodesic ray transform on the transversal manifold $(M_0,g_0)$ is injective. In this case, a global WKB approach no longer seems possible, and the complex geometric optics solutions are obtained via a Gaussian beams quasimode construction. We refer to \cite{DKurylevLLimS}, \cite{Krup_Liimatainen_Salo} for the study of the linearized anisotropic Calder\'on problem on transversally anisotropic manifolds.

The goal of this note is to provide a reconstruction procedure for the uniqueness results of \cite{DKurylevLS_2016}. To state our results, let us first give the following definition. 
\begin{defn}
We say that the geodesic ray transform on the transversal manifold $(M_0,g_0)$ is constructively invertible  if any function $f\in C(M_0)$ can be reconstructed from the knowledge of its integrals over all non-tangential geodesics in $M_0$. Here a unit speed geodesic $\gamma:[0,L]\to M_0$ is called non-tangential if $\dot{\gamma}(0), \dot{\gamma}(L)$ are non-tangential vectors on $\p M_0$ and $\gamma(t)\in M_0^{\emph{\text{int}}}$ for all $0<t<L$.  
\end{defn}

Our first result is as follows. 
\begin{thm}
\label{thm_main_1}
Let $(M,g)$ be a given CTA manifold and assume that the geodesic ray transform on the transversal manifold $(M_0,g_0)$ is constructively invertible. If $0< c\in C^\infty(M)$  then from the knowledge of $\Lambda_{cg,0}$
one can constructively determine $c$. 
\end{thm}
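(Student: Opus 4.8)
The plan is to determine the conformal factor $c$ on $M$ from $\Lambda_{cg,0}$ in two stages: first recover the boundary values of $c$ and its normal derivatives on $\p M$, and then recover $c$ in the interior. For the boundary determination, I would use the fact, recalled in the abstract, that one can read off the full jet of a continuous coefficient on $\p M$ from the Dirichlet--to--Neumann map; applied here, the relation $\Lambda_{cg,0} = c^{-1/2}\Lambda_{g, q_c}(c^{1/2}\,\cdot\,)$ (the standard conformal change of variables turning the Laplace equation for $cg$ into a Schr\"odinger equation for $-\Delta_g + q_c$ with $q_c = c^{(n-2)/4}\Delta_g(c^{-(n-2)/4})$ times the appropriate power, in dimension $n$) reduces the problem to a Schr\"odinger inverse problem on the fixed CTA manifold $(M,g)$. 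Since $c|_{\p M}$ and $\p_\nu c|_{\p M}$ are determined by $\Lambda_{cg,0}$ (boundary determination of conductivity-type coefficients), we may reduce to showing that the potential $q_c$ is determined in $M^{\text{int}}$ by $\Lambda_{g,q_c}$; once $q_c$ is known and $c$ together with its first normal derivative are known on $\p M$, solving the elliptic equation $\Delta_g(c^{-(n-2)/4}) = -q_c \, c^{-(n-2)/4}$ (a well-posed problem with the given Cauchy data, or rather with the Dirichlet data and the knowledge that a positive solution exists) recovers $c$ throughout $M$.

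The heart of the matter is therefore the constructive determination of a continuous potential $q$ on a CTA manifold from $\Lambda_{g,q}$, under the hypothesis that the geodesic ray transform on $(M_0,g_0)$ is constructively invertible. Here I would follow the Gaussian beam / complex geometric optics scheme of \cite{DKurylevLS_2016}, but made constructive. Write the CTA structure as $cg = e \oplus g_0$ on $\R \times M_0$ with Carleman weight $\varphi(x) = x_1$. For a fixed non-tangential geodesic $\gamma$ on $M_0$ and a large parameter $\tau$, one constructs a pair of CGO solutions $u_1, u_2$ of $(-\Delta_g + q)u = 0$ of the form $u_j = e^{\mp \tau(\varphi + i\psi)}(v_j + r_j)$, where $v_j$ is a Gaussian beam quasimode concentrated along $\{x_1\}\times\gamma$ and $r_j \to 0$ in a suitable sense as $\tau \to \infty$. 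Feeding the Dirichlet data of $u_1$ into $\Lambda_{g,q}$ and pairing with the boundary trace of $u_2$, the standard integral identity produces, in the limit $\tau\to\infty$, the integral of $q$ (weighted by the amplitudes of the Gaussian beams, which are explicit and nonvanishing) along the geodesic $\gamma$ — more precisely, an attenuated/Fourier-transformed version of the geodesic ray transform of the continuous function obtained by integrating $q$ over the $x_1$-variable against $e^{-2i\lambda x_1}$. Since this holds for all non-tangential $\gamma$ and all frequencies $\lambda$, constructive invertibility of the ray transform on $(M_0,g_0)$ yields $\widehat{q}(\lambda,\cdot)$ on $M_0$ for each $\lambda$, and Fourier inversion in $x_1$ then yields $q$ in $M^{\text{int}}$.

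The main obstacle — and the part that requires the machinery advertised in the abstract — is that to run the integral identity one needs the \emph{boundary traces} of the CGO solutions $u_1, u_2$, i.e. the Dirichlet data $u_j|_{\p M}$, to be constructively available from $\Lambda_{g,q}$ alone, not just abstractly known to exist. The Gaussian beam quasimodes $v_j$ are constructed explicitly (hence their boundary traces are explicit), but the correction terms $r_j$ are only characterized by a solvability estimate; one must show that $r_j|_{\p M}$ can be determined constructively. This is exactly the point where I would import the method of \cite{Nachman_Street_2010}, in the simplified form mentioned in the abstract: one sets up a boundary integral equation — a Fredholm equation of the second kind on $\p M$ — whose solution is precisely the boundary trace of $u_j$, with data built from $\Lambda_{g,q}$, the free (or rather $(M,g)$-Laplacian) single-layer operator, and the explicit quasimode $v_j$; solvability and uniqueness of this equation follow from the same Carleman-estimate-based a priori bounds that give existence of $r_j$, and — as the abstract notes — the relevant solution space can be identified with an ordinary Sobolev space $H^{s}(\p M)$, which makes the reconstruction genuinely constructive. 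A secondary technical point is that $q = q_c$ is only continuous, not smooth, so the quasimode construction and the remainder estimates must be carried out with the limited regularity tracked carefully, as in \cite{DKurylevLS_2016}; and one must verify that the limiting quantity extracted from the integral identity is a continuous function on $M_0$ for each $\lambda$, so that constructive invertibility of the ray transform applies.
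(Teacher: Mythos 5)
Your overall strategy is exactly the paper's: Theorem \ref{thm_main_1} is reduced to Theorem \ref{thm_main_2} by the conformal change of variables as in \cite{Kenig_Salo_Uhlmann_2011} (the paper simply cites that argument and proves only Theorem \ref{thm_main_2}), and your sketch of the potential reconstruction --- Gaussian beam quasimodes along non-tangential transversal geodesics, CGO solutions built with the Green operators $G_{\pm\varphi}$ for $\varphi=x_1$, the Nachman--Street boundary integral equation $(1+h^2 S_\varphi(\Lambda_{g,q}-\Lambda_{g,0}))f=u_0|_{\p M}$ to constructively obtain the boundary traces, boundary determination of the continuous potential to extend it outside $M$, and finally the ray transform inversion --- matches the paper step for step.

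One step as you state it would not go through: you claim that ``constructive invertibility of the ray transform on $(M_0,g_0)$ yields $\widehat{q}(\lambda,\cdot)$ on $M_0$ for each $\lambda$.'' What the limit of the integral identity actually produces is
\[
\int_0^L \hat q(2\lambda,\gamma(t))\,e^{-2\lambda t}\,dt,
\]
i.e.\ the \emph{attenuated} geodesic ray transform of $\hat q(2\lambda,\cdot)$ with constant attenuation $-2\lambda$, whereas the hypothesis of the theorem only grants constructive invertibility of the unattenuated transform. The paper bridges this by setting $\lambda=0$ to recover $\hat q(0,\cdot)$, then differentiating the identity repeatedly in $\lambda$ at $\lambda=0$: at each order the new unknown enters through the plain (unattenuated) transform of $\p_\lambda^k\hat q(0,\cdot)$, with lower-order terms already known, so the hypothesis applies; since $q$ has been extended with compact support in $x_1$, the map $\lambda\mapsto\hat q(\lambda,x')$ is entire and is determined by its Taylor coefficients at $0$, after which Fourier inversion in $x_1$ gives $q$. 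Without this (or an assumption on the attenuated transform), your final inversion step is not justified. Apart from this point, your proposal is a faithful blind reconstruction of the paper's argument.
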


An inverse problem closely related to the anisotropic Calder\'on problem is the inverse boundary problem for the Schr\"odinger equation, which we shall proceed to discuss next. Let $q\in C(M)$, and consider Dirichlet problem
\begin{equation}
\label{eq_int_2}
\begin{cases}
(-\Delta+q)u=0 & \text{in}\quad M^{\text{int}},\\
u|_{\p M}=f.
\end{cases}
\end{equation}
In what follows assume  that $0$ is not a Dirichlet eigenvalue of $-\Delta+q$ in $M$ so that $-\Delta+q: (H^2\cap H^1_0)(M)\to L^2(M)$ is bijective.  Under this assumption, for any $f\in H^{\frac{1}{2}}(\p M)$, the problem \eqref{eq_int_2}
has a unique solution $u\in H^1(M^{\text{int}})$. Associated to \eqref{eq_int_2}, we define the Dirichlet--to--Neumann map $\Lambda_{g,q}: H^{\frac{1}{2}}(\p M)\to H^{-\frac{1}{2}}(\p M)$ by 
\begin{equation}
\label{eq_int_2_DN}
\langle \Lambda_{g, q} f, k \rangle_{H^{-\frac{1}{2}}(\p M), H^{\frac{1}{2}}(\p M)}=\int_M( \langle du, dv\rangle_g +qu v)dV_g,
\end{equation}
Here $k\in H^{\frac{1}{2}}(\p M)$, $v\in H^1(M^{\text{int}})$ is such that $v|_{\p M}=k$, and $\langle \cdot, \cdot\rangle_g$ is the pointwise scalar product in the space of $1$-forms. 

Our second result is as follows. 
\begin{thm}
\label{thm_main_2}
Let $(M,g)$ be a given CTA manifold and assume that the geodesic ray transform on the transversal manifold $(M_0,g_0)$ is constructively invertible. Let $q\in C(M)$ be such that $0$ is not a Dirichlet eigenvalue of $-\Delta+q$ in $M$. Then the knowledge of $\Lambda_{g,q}$ determines $q$  constructively. 
\end{thm}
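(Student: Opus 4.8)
The plan is to follow the standard Sylvester--Uhlmann strategy adapted to the CTA setting, reducing Theorem~\ref{thm_main_2} to the constructive invertibility of the geodesic ray transform on $(M_0,g_0)$. First I would reduce matters to a product: after a conformal change $g = c(e\oplus g_0)$, the equation $(-\Delta_g+q)u=0$ transforms, via $u = c^{-(n-2)/4}\tilde u$, into a Schr\"odinger equation $(-\Delta_{e\oplus g_0}+\tilde q)\tilde u = 0$ on the transversally anisotropic manifold, with $\tilde q$ continuous and explicitly determined by $q$, $c$, $n$; moreover the Dirichlet--to--Neumann maps are related by an explicit conjugation involving $c|_{\p M}$ and $\p_\nu c|_{\p M}$. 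The boundary values $c|_{\p M}$ and $\p_\nu c|_{\p M}$ are recoverable from $\Lambda_{g,q}$ (indeed from $\Lambda_{cg,0}$, or more directly the boundary jet of the potential is recoverable by the boundary determination formula alluded to in the abstract). Hence it suffices to reconstruct a continuous potential on a transversally anisotropic manifold from its DN map, which I now assume $g = e\oplus g_0$.

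The next step is to construct, and determine the boundary traces of, complex geometric optics (CGO) solutions based on Gaussian beam quasimodes, as in \cite{DKurylevLS_2016}. For $s > 0$ large and a non-tangential unit-speed geodesic $\gamma$ on $M_0$, one builds quasimodes $v_s(x_1,x') \approx e^{-s x_1}(v_0 + \ldots)$ concentrating on $\gamma$, so that $e^{sx_1}v_s$ solves $(-\Delta+\tilde q)(e^{sx_1}(\ldots))=0$ up to an error small in $s$. The genuine CGO solution is $u_1 = e^{-sx_1}(v_0 e^{is\psi}+\ldots) + r_1$ with $\|r_1\|$ controlled; a conjugate solution $u_2 = e^{sx_1}(\ldots)$ is built with the reflected geodesic. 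The key constructive point — this is where I would invoke the simplified Nachman--Street approach \cite{Nachman_Street_2010} advertised in the abstract — is that the \emph{boundary trace} $u_1|_{\p M}$ can be determined from $\Lambda_{g,\tilde q}$: one characterizes $u_1|_{\p M}$ as the unique solution of a boundary integral equation whose data are built from the explicitly known leading Gaussian-beam profile on $\p M$ and from $\Lambda_{g,\tilde q}$ (comparing against $\Lambda_{g,0}$ to invert a Fredholm operator on $H^{1/2}(\p M)$, after identifying the Nachman--Street space with a standard Sobolev space). Uniqueness of the quasimode-based CGO — which is the subtle feature distinguishing this from the admissible case — is what makes this boundary integral equation well-posed.

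With boundary traces of $u_1, u_2$ in hand, I would form the pairing
\begin{equation*}
\int_{\p M}\big( (\Lambda_{g,\tilde q}-\Lambda_{g,0}) (u_1|_{\p M})\big)\, (u_2|_{\p M})\, dV
= \int_M \tilde q\, u_1 u_2\, dV_g,
\end{equation*}
which is computable from the data. Letting $s\to\infty$ and using the concentration of $v_s$ along $\gamma$ together with stationary phase in the transversal variable, the right-hand side converges (after dividing by the appropriate power of $s$) to a constant times the attenuated/unattenuated geodesic ray integral $\int_0^L \widehat{\tilde q}(\gamma(t))\, dt$, where $\widehat{\tilde q}(x') = \int_{\R}\tilde q(x_1,x')\,dx_1$ is the Fourier transform in $x_1$ evaluated at a frequency fixed by the phases; taking the phase parameter to zero gives $\int_\gamma \widehat{\tilde q}(\cdot,\lambda)\, dt$ for each $\lambda\in\R$. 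Thus from $\Lambda_{g,\tilde q}$ we constructively obtain the geodesic ray transform on $M_0$ of $\widehat{\tilde q}(\cdot,\lambda)\in C(M_0)$ for every $\lambda$; inverting it constructively (the standing hypothesis) recovers $\widehat{\tilde q}(\cdot,\lambda)$ for all $\lambda$, hence $\tilde q$ by Fourier inversion in $x_1$, hence $q$.

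The main obstacle is the second step: making the determination of the CGO boundary traces genuinely constructive. Unlike in the admissible case, the Gaussian beam quasimodes are not global WKB solutions, their remainders $r_j$ are not negligible in a classical sense but only controlled in $s$-dependent norms, and — crucially — they are not uniquely determined by their formal construction. I expect the heart of the argument to be (a) proving a uniqueness statement for the relevant class of CGO solutions adapted to the quasimode ansatz, so that the boundary integral equation has a unique solution, and (b) checking that the simplified Nachman--Street functional-analytic framework applies in this geometry, including the identification of their main space with $H^{1/2}(\p M)$ (or an appropriate Sobolev space on $\p M$), which is needed to turn the abstract solvability into an implementable reconstruction. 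The convergence/stationary-phase step and the boundary determination of $c|_{\p M}$, $\p_\nu c|_{\p M}$ are comparatively routine, following known templates.
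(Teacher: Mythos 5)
Your overall architecture matches the paper's: reduce to the product case via the conformal rescaling, build Gaussian-beam CGO solutions, determine the boundary trace of the Schr\"odinger-side solution by the Nachman--Street boundary integral equation, and pass to the limit in the integral identity to read off a ray transform on $M_0$. Two smaller corrections to your description: in the paper only $u_1$ solves the Schr\"odinger equation, while the second solution $u_2=e^{sx_1}(v_s+\tilde r_2)$ is taken \emph{harmonic}, so its boundary trace is explicitly known and no integral equation is needed for it (the integral identity pairs $\Lambda_{g,q}-\Lambda_{g,0}$ against a product of one $b_q$-solution and one harmonic function); and the Nachman--Street space $\mathcal{H}(\p M)$ is identified with $H^{-\frac{1}{2}}(\p M)$, not $H^{\frac{1}{2}}(\p M)$, with the integral equation posed and uniquely solved there.

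The genuine gap is in your final inversion step. With $s=\frac{1}{h}+i\lambda$, the limit of the pairing is
\[
\int_0^L \hat q(2\lambda,\gamma(t))\,e^{-2\lambda t}\,dt ,
\]
i.e.\ the \emph{attenuated} geodesic ray transform of $\hat q(2\lambda,\cdot)$ with constant attenuation $-2\lambda$: the Fourier frequency in $x_1$ and the attenuation coefficient are the \emph{same} parameter $\lambda$ and cannot be decoupled by ``taking the phase parameter to zero.'' The standing hypothesis only provides constructive invertibility of the \emph{unattenuated} transform, so for $\lambda\neq 0$ you cannot directly invert what the data give you, and your claim to obtain the (unattenuated) ray transform of $\hat q(\cdot,\lambda)$ for every $\lambda$ is unjustified. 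The paper closes this by setting $\lambda=0$ to recover $\hat q(0,\cdot)$, then repeatedly differentiating the identity in $\lambda$ at $\lambda=0$: at each order the only new unknown $\p_\lambda^k\hat q(0,\cdot)$ enters through its unattenuated ray transform, the attenuation factor contributing only already-known lower-order terms. Since $q$ is first extended to a compactly supported continuous function on $T$ with known values outside $M$ (this is exactly where the boundary determination of $q|_{\p M}$ from Appendix A is needed), $\lambda\mapsto\hat q(\lambda,x')$ is entire, hence determined by its Taylor series at $0$, and one concludes by Fourier inversion. Without this differentiation-and-analyticity argument your reconstruction does not close under the stated hypothesis.
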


\begin{rem}The constructive invertibility of the geodesic ray transform is known in the following cases, in particular: 
\begin{itemize}
\item $(M_0,g_0)$ is a simple Riemannian surface. In this case, there is a Fredholm type inversion formula established in \cite{Pestov_Uhlmann_2004}, which leads to the exact inversion in the constant curvature case, see \cite{Pestov_Uhlmann_2004},  and its small perturbations, see  \cite{Krishnan_2010}. See also \cite{Monard_2014}, \cite{Monard_2014_2}.

\item  $(M_0,g_0)$ is a Riemannian surface with strictly convex boundary, no conjugate points, and the hyperbolic trapped set (these conditions are satisfied in negative curvature, in particular). In this case, a Fredholm type inversion formula was obtained in  \cite{Guillarmou_Monard_2017}. The inversion formula becomes exact in a neighborhood of a constant negatively curved metric. 

\item $(M_0, g_0)$ is of dimension $n\ge 3$, has a strictly convex boundary and is globally foliated by strictly convex hypersurfaces. In this setting, a layer stripping type algorithm for reconstruction was developed in \cite{Uhlmann_Vasy_2016}. 

\end{itemize}

\end{rem}

\begin{rem}  Theorem \ref{thm_main_1} and Theorem \ref{thm_main_2} are valid in the case of admissible manifolds in particular, thereby providing an alternative proof of the reconstruction results established in \cite{Kenig_Salo_Uhlmann_2011}.  Furthermore, Theorem \ref{thm_main_2} improves the regularity assumption on the potential in the corresponding result of  \cite{Kenig_Salo_Uhlmann_2011} as it holds for a continuous potential whereas the corresponding result of \cite{Kenig_Salo_Uhlmann_2011} requires that the potential should be smooth. 
\end{rem}

Following the pioneering works \cite{Nachman_1988}, \cite{Novikov_1988},  we know that the crucial step in the reconstruction procedure of a potential from the corresponding Dirichlet--to--Neumann map consists of constructively determining the boundary traces of suitable complex geometric optics solutions. To the best of our knowledge, there exist two approaches to the reconstruction of such boundary traces. The first one is due to \cite{Nachman_1988} in the Euclidean setting, where suitable complex geometric optics solutions are constructed globally on all of $\R^n$, enjoying uniqueness properties characterized by decay at infinity. The second one is due to \cite{Nachman_Street_2010}, where the complex geometric optics solutions are constructed by means of Carleman estimates on a bounded domain, and the notion of uniqueness is obtained by restricting the attention to solutions of minimal norm. A common point of both approaches is that the boundary traces of the complex geometric optics solutions in question are determined as unique solutions of well posed integral equations on the boundary of the domain, involving the Dirichlet--to--Neumann map along with other known quantities. The approach of \cite{Nachman_1988}  was extended to the case of admissible manifolds in \cite{Kenig_Salo_Uhlmann_2011} for the Schr\"odinger equation, see also \cite{Campos_2019} for the magnetic Schr\"odinger case in a cylindrical setting. The approach of \cite{Nachman_Street_2010}, which was developed for the Calder\'on problem with partial data, was extended to admissible manifolds in \cite{Assylbekov_2017}, also in the partial data case. 

In this note, we give a simplified presentation of the method of  \cite{Nachman_Street_2010}  in the full data case, for manifolds admitting limiting Carleman weights. We also point out that the space $\mathcal{H}(\p M)$ introduced in \cite{Nachman_Street_2010},  where the main boundary integral equation is solved, agrees with the standard Sobolev space $H^{-\frac{1}{2}}(\p M)$. 

We proceed next to discuss the ideas of the proof of Theorem \ref{thm_main_1} and Theorem \ref{thm_main_2}. First we observe that Theorem \ref{thm_main_1} follows from Theorem \ref{thm_main_2} along exactly the same lines as in  \cite{Kenig_Salo_Uhlmann_2011}, and therefore only Theorem \ref{thm_main_2} will be proved. 
In doing so, we shall rely on complex geometric optics solutions constructed on general CTA manifolds, based on Gaussian beams quasimodes along non-tangential geodesics in $M_0$. Such solutions were constructed in  \cite{DKurylevLS_2016} without any notion of uniqueness involved. In this note, following the method of \cite{Nachman_Street_2010}, we refine this construction somewhat and obtain complex geometric optics solutions enjoying uniqueness properties.  Another ingredient in the proof of Theorem \ref{thm_main_2}  is a reconstruction formula for $q|_{\p M}$  from the knowledge of the Dirichlet--to--Neumann map $\Lambda_{g,q}$ which is performed in Appendix \ref{app_boundary_reconstruction}. This is precisely the result which allows us to improve the regularity of potential in the result of \cite{Kenig_Salo_Uhlmann_2011}.

Finally, we would like to point out that similarly to the reconstructions results of \cite{Kenig_Salo_Uhlmann_2011}, in the reconstruction procedure developed in this note, we make no claims regarding its practicality, our purpose merely being to show that all the steps in the proof of the uniqueness result of \cite{DKurylevLS_2016}  can be carried out constructively. 

The plan of this paper is as follows. Section \ref{sec_Nachman-Street} gives a presentation of the method of  \cite{Nachman_Street_2010}  in the full data case, for manifolds admitting a limiting Carleman weight.  Section \ref{sec_proof_of_thm_main} is devoted to the construction of complex geometric optics solutions enjoying uniqueness properties and to the proof of Theorem \ref{thm_main_2}. Appendix \ref{app_boundary_reconstruction} contains  a reconstruction formula for the boundary traces of a continuous potential from the knowledge of the Dirichlet--to--Neumann map.

\section{The Nachman--Street argument on manifolds admitting a limiting Carleman weight }

\label{sec_Nachman-Street}

The discussion in this section can be regarded as a simplified version of the constructive approach of \cite{Nachman_Street_2010} to determining boundary traces of complex geometric optics solutions, in the full data case, in the setting of compact manifolds with boundary admitting a limiting Carleman weight. We also identify the space $\mathcal{H}(\p M)$ of \cite{Nachman_Street_2010}, where the main boundary integral equation is posed, with the standard Sobolev space $H^{-\frac{1}{2}}(\p M)$. 

Let $(M,g)$ be a smooth compact Riemannian manifold of dimension $n\ge 3$ with smooth boundary $\p M$. Let us consider the semiclassical Laplace--Beltrami operator $-h^2\Delta_g=-h^2\Delta$ on $M$, where $h>0$ is a small semiclassical parameter. Assume, as we may,  that $(M,g)$ is embedded in a compact smooth Riemannian manifold $(N,g)$ without boundary of the same dimension, and let $U$ be open in $N$ such that $M\subset U$. 

 Let $\varphi\in C^\infty(U;\R)$ and let us consider the conjugated operator
 \begin{equation}
\label{eq_Car_-2}
P_\varphi=e^{\frac{\varphi}{h}}(-h^2\Delta)e^{-\frac{\varphi}{h}}=-h^2\Delta -|\nabla \varphi|^2+2\langle \nabla \varphi, h\nabla\rangle +h\Delta\varphi,
\end{equation}
with the semiclassical principal symbol 
\begin{equation}
\label{eq_Car_1}
p_\varphi=|\xi|^2-|d\varphi|^2+2i \langle \xi, d\varphi\rangle\in C^\infty(T^*U).
\end{equation}
Here and in what follows we use $\langle \cdot, \cdot \rangle$ and $|\cdot|$ to denote the Riemannian scalar product and norm both on the tangent and cotangent space.  

Following \cite{Kenig_Sjostrand_Uhlmann},  \cite{DKSaloU_2009}, we say that $\varphi\in C^\infty(U;\R)$ is a limiting Carleman weight for $-h^2\Delta$ on $(U,g)$ if $d\varphi\ne 0$ on $U$, and the Poisson bracket of $\Re p_\varphi$ and $\Im p_\varphi$ satisfies,
\[
\{\Re p_\varphi, \Im p_\varphi\}=0 \quad\text{when}\quad  p_\varphi=0. 
\]
We refer to \cite{DKSaloU_2009} for a characterization of Riemannian manifolds admitting limiting Carleman weights.

Our starting point is the following Carleman estimates for $-h^2\Delta$, established in  \cite{DKSaloU_2009}, see also \cite{Krup_Uhlmann_2018}. 
\begin{prop}
Let $\varphi\in C^\infty(U;\R)$ be a limiting Carleman weight for $-h^2\Delta$ on $(U,g)$. Then for all $0<h\ll 1$, we have 
\begin{equation}
\label{eq_2_1}
h\|u\|_{L^2(M)}\le C\|P_\varphi u\|_{L^2(M)}, \quad C>0, 
\end{equation}
for all $u\in C^\infty_0(M^{\text{int}})$. 
\end{prop}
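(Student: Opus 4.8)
The plan is to prove the estimate by the classical positive-commutator argument for Carleman estimates, in the semiclassical limiting-weight form of \cite{DKSaloU_2009}.

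First I would decompose $P_\varphi$ into its formally self-adjoint and skew-adjoint parts. From \eqref{eq_Car_-2} one has $P_\varphi = A + iB$ with
\[
A = -h^2\Delta - |\nabla\varphi|^2, \qquad B = -i\bigl(2h\langle\nabla\varphi,\nabla\rangle + h\Delta\varphi\bigr),
\]
both formally self-adjoint semiclassical differential operators, of orders $2$ and $1$ respectively, with semiclassical principal symbols $a = |\xi|^2 - |d\varphi|^2 = \Re p_\varphi$ and $b = 2\langle\xi, d\varphi\rangle = \Im p_\varphi$. Since $u\in C_0^\infty(M^{\text{int}})$ is compactly supported in the interior, every integration by parts is free of boundary terms, and expanding the square gives the elementary identity
\[
\|P_\varphi u\|_{L^2(M)}^2 = \|Au\|_{L^2(M)}^2 + \|Bu\|_{L^2(M)}^2 + \bigl(i[A,B]u, u\bigr)_{L^2(M)} .
\]
Here $i[A,B]$ is formally self-adjoint with semiclassical principal symbol $h\{a,b\}$, and the defining property of a limiting Carleman weight is precisely that $\{a,b\}$ vanishes on the characteristic set $\{p_\varphi = 0\} = \{a = b = 0\}$. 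Thus the commutator term degenerates, over the characteristic variety, to the same order as the squares $\|Au\|_{L^2(M)}^2$ and $\|Bu\|_{L^2(M)}^2$, so the identity alone does not close; removing this borderline degeneracy is the crux of the argument.

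The remedy is H\"ormander's convexification. I would replace $\varphi$ by $\varphi_\varepsilon = \varphi + \tfrac{h}{2\varepsilon}\varphi^2$ with $\varepsilon > 0$ small but fixed. Since $e^{\varphi_\varepsilon/h} = e^{\varphi^2/(2\varepsilon)}e^{\varphi/h}$, one has $P_{\varphi_\varepsilon} = e^{\varphi^2/(2\varepsilon)}P_\varphi\, e^{-\varphi^2/(2\varepsilon)}$, and at the level of symbols passing to $\varphi_\varepsilon$ amounts to replacing $d\varphi$ by $(1+\tfrac{h\varphi}{\varepsilon})d\varphi$, together with $O(h)$ corrections that must be tracked. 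Running the square identity above for $P_{\varphi_\varepsilon}$ and exploiting the $x$-dependence of the convexifying factor together with the limiting Carleman weight property, the modified commutator symbol acquires a strictly positive contribution of size $\sim h^2/\varepsilon$ on the characteristic set $\{a_\varepsilon = b_\varepsilon = 0\}$ — here the hypothesis $d\varphi\neq 0$ on $U$ enters — while all remaining new contributions there are either lower order in $h/\varepsilon$ or absorbable into $\|A_\varepsilon u\|_{L^2(M)}^2 + \|B_\varepsilon u\|_{L^2(M)}^2$. Since $a_\varepsilon^2 + b_\varepsilon^2$ is bounded below away from the characteristic set, a phase-space partition of unity and the sharp G\aa rding inequality, followed by absorbing the lower-order terms into the left-hand side, yield
\[
\frac{h^2}{\varepsilon}\,\|u\|_{L^2(M)}^2 \le C\,\|P_{\varphi_\varepsilon} u\|_{L^2(M)}^2, \qquad u\in C_0^\infty(M^{\text{int}}),\quad 0 < h \ll 1 .
\]

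Finally I would transfer this back to $\varphi$. Since $M$ is compact and $\varphi$ is smooth, $e^{\varphi^2/(2\varepsilon)}$ and its reciprocal are bounded on $M$ by constants depending only on $\varepsilon$ and $\varphi$ and not on $h$; substituting $u = e^{-\varphi^2/(2\varepsilon)}w$ into the previous display and then fixing $\varepsilon$ once and for all gives $h\|w\|_{L^2(M)} \le C\|P_\varphi w\|_{L^2(M)}$ for $0 < h \ll 1$, which is the assertion (in fact the argument gives the slightly stronger bound with the $L^2$-norm on the left replaced by the semiclassical $H^1$-norm). I expect the genuinely delicate step to be the convexification: verifying that the perturbed commutator symbol is positive on the characteristic variety modulo terms absorbable by $\|A_\varepsilon u\|_{L^2(M)}^2 + \|B_\varepsilon u\|_{L^2(M)}^2$, and then running the ensuing G\aa rding argument. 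This is the one place where the hypothesis that $\varphi$ is a \emph{limiting} — rather than an arbitrary — Carleman weight is essential, and it is the technical heart of the estimate; the algebraic square identity and the exponential transfer are routine.
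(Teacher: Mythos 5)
The paper does not prove this proposition at all --- it simply cites \cite{DKSaloU_2009} (see also \cite{Krup_Uhlmann_2018}) --- and your outline is exactly the argument of that reference: the decomposition $P_\varphi=A+iB$ into self-adjoint and skew-adjoint parts, the square identity with the commutator term $\bigl(i[A,B]u,u\bigr)$, the convexification $\varphi_\varepsilon=\varphi+\tfrac{h}{2\varepsilon}\varphi^2$ to produce a positive commutator of size $h^2/\varepsilon$ on the characteristic set (using $d\varphi\neq 0$), and the transfer back via the $h$-independent bounded factor $e^{\varphi^2/(2\varepsilon)}$. Your sketch is structurally correct and correctly identifies the positivity of the perturbed commutator modulo terms absorbable into $\|A_\varepsilon u\|^2+\|B_\varepsilon u\|^2$ as the one genuinely technical step.
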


Note that if $\varphi$ is a limiting Carleman weight for $-h^2\Delta$ then so is $-\varphi$.  Let $P_\varphi^*$ be the formal $L^2(M)$--adjoint of $P_\varphi$.  We have $P_\varphi^*=P_{-\varphi}$. Let us also introduce the following closed subspace of $L^2(M)$,
\[
\text{Ker}(P_\varphi)=\{u\in L^2(M): P_\varphi u=0\}, 
\]
so that we have
\[
L^2(M)=\text{Ker}(P_\varphi)\oplus (\text{Ker}(P_\varphi))^\perp. 
\]

Following \cite{Nachman_Street_2010}, we shall now proceed to construct Green's operator for $P_\varphi$. To that end, we have the following solvability result. 
\begin{prop}
\label{prop_solvab}
Let $\varphi\in C^\infty(U;\R)$ be a limiting Carleman weight for $-h^2\Delta$ on $(U,g)$. Then for all $0<h\ll 1$ and any $v\in L^2(M)$, there is a unique solution $u\in  (\emph{\text{Ker}} (P_\varphi))^\perp $ of the equation
\begin{equation}
\label{eq_2_2}
P_\varphi u=v\quad \text{in}\quad M^{\text{int}}.
\end{equation}
Furthermore, $u$ satisfies the bound
\begin{equation}
\label{eq_2_3}
\|u\|_{L^2(M)}\le \frac{C}{h}\|v\|_{L^2(M)},
\end{equation}
for all $0<h\ll 1$. 
\end{prop}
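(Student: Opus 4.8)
The plan is to deduce solvability and uniqueness from the Carleman estimate \eqref{eq_2_1} together with its counterpart for the adjoint, by a standard Hahn--Banach/Riesz functional analytic argument. First I would observe that since $-\varphi$ is also a limiting Carleman weight and $P_\varphi^* = P_{-\varphi}$, the estimate \eqref{eq_2_1} applies equally to $P_\varphi^*$, giving $h\|w\|_{L^2(M)} \le C\|P_\varphi^* w\|_{L^2(M)}$ for all $w \in C^\infty_0(M^{\text{int}})$, and hence, by density and a routine regularization argument, for all $w$ in the domain of the maximal realization of $P_\varphi^*$ that vanish in the appropriate sense (or more simply, I would work with $P_\varphi^*$ acting on $C_0^\infty(M^{\text{int}})$ and use the a priori estimate to run a duality argument). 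For existence: given $v \in L^2(M)$, consider the linear functional $P_\varphi^* w \mapsto \langle w, v\rangle_{L^2(M)}$ defined on the subspace $\{P_\varphi^* w : w \in C_0^\infty(M^{\text{int}})\} \subset L^2(M)$. The estimate for $P_\varphi^*$ shows this functional is well defined (if $P_\varphi^* w = 0$ then $w = 0$) and bounded by $\frac{C}{h}\|v\|_{L^2(M)}$ in the $L^2(M)$ norm of $P_\varphi^* w$. By Hahn--Banach and Riesz representation, there exists $u \in L^2(M)$ with $\|u\|_{L^2(M)} \le \frac{C}{h}\|v\|_{L^2(M)}$ such that $\langle P_\varphi^* w, u\rangle = \langle w, v\rangle$ for all $w \in C_0^\infty(M^{\text{int}})$, which is precisely the statement that $P_\varphi u = v$ in the distributional sense on $M^{\text{int}}$. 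This already yields \eqref{eq_2_3}.

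Next I would address uniqueness within $(\text{Ker}(P_\varphi))^\perp$. Suppose $u_1, u_2 \in (\text{Ker}(P_\varphi))^\perp$ both solve \eqref{eq_2_2}; then $u := u_1 - u_2 \in (\text{Ker}(P_\varphi))^\perp$ satisfies $P_\varphi u = 0$ in $M^{\text{int}}$, i.e.\ $u \in \text{Ker}(P_\varphi)$. Hence $u \in \text{Ker}(P_\varphi) \cap (\text{Ker}(P_\varphi))^\perp = \{0\}$, so $u_1 = u_2$. Finally, to exhibit the solution \emph{in} $(\text{Ker}(P_\varphi))^\perp$: take the $u$ produced above and let $P$ denote the orthogonal projection onto $(\text{Ker}(P_\varphi))^\perp$; then $u - Pu \in \text{Ker}(P_\varphi)$, so $P_\varphi(Pu) = P_\varphi u = v$ in $M^{\text{int}}$, while $Pu \in (\text{Ker}(P_\varphi))^\perp$ and $\|Pu\|_{L^2(M)} \le \|u\|_{L^2(M)} \le \frac{C}{h}\|v\|_{L^2(M)}$. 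Replacing $u$ by $Pu$ gives the asserted solution.

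The one point requiring care — and the main potential obstacle — is the precise function space in which $P_\varphi u = v$ is to be interpreted and the extent to which the Carleman estimate \eqref{eq_2_1}, a priori valid only on $C_0^\infty(M^{\text{int}})$, transfers to $P_\varphi^*$ on a dense enough class to make the Hahn--Banach step legitimate; one must ensure that the space of data $\{P_\varphi^* w : w \in C_0^\infty(M^{\text{int}})\}$ is tested against enough functions that the resulting $u$ genuinely satisfies the PDE in $M^{\text{int}}$ in the sense of distributions, and that no boundary contributions are silently dropped. This is handled by the standard observation that $C_0^\infty(M^{\text{int}})$ is dense in $L^2(M)$ and that duality against such test functions characterizes distributional solutions on the open manifold $M^{\text{int}}$; interior elliptic regularity for $P_\varphi$ (which is elliptic in the classical sense, its semiclassical principal symbol $p_\varphi$ vanishing only on a codimension-two set but the classical principal symbol $|\xi|^2$ being elliptic) then shows $u$ is as regular as $v$ allows away from $\p M$, though this is not needed for the statement. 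Everything else is routine Hilbert-space manipulation.
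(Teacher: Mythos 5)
Your proposal is correct and follows essentially the same duality argument as the paper: both define the functional $P_\varphi^* w \mapsto (w,v)_{L^2(M)}$ on $P_\varphi^* C_0^\infty(M^{\text{int}})$, use the Carleman estimate for $P_\varphi^* = P_{-\varphi}$ for well-definedness and the bound, and apply Riesz representation. The only cosmetic difference is that the paper applies Riesz directly on the closed subspace $\overline{P_\varphi^* C_0^\infty(M^{\text{int}})}$ and observes that this subspace equals $(\text{Ker}(P_\varphi))^\perp$, so the solution lands in the right space automatically, whereas you extend by Hahn--Banach to all of $L^2(M)$ and then project onto $(\text{Ker}(P_\varphi))^\perp$ at the end; both routes are valid.
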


\begin{proof}
Let $v\in L^2(M)$ and let us first show the existence of a solution to \eqref{eq_2_2} in the space $(\text{Ker}(P_\varphi))^\perp$. To that end, consider the following linear functional
\[
L:P_\varphi^*C_0^\infty(M^{\text{int}})\to \C, \quad L(P_\varphi^* w)=(w,v)_{L^2(M)}.
\]
By the Carleman estimate \eqref{eq_2_1} for $P_\varphi^*$, the map $L$ is well-defined.  Let $w\in C^\infty_0(M^{\text{int}})$. Then by using the Carleman estimate \eqref{eq_2_1} for $P_\varphi^*$ again, we get 
\[
| L(P_\varphi^* w)|\le \frac{C}{h}\| P_\varphi^* w\|_{L^2(M)}\|v\|_{L^2(M)}.
\]
By continuity,  $L$ extends to a linear continuous functional on the closed subspace $\overline{P_\varphi^*C_0^\infty(M^{\text{int}})}\subset L^2(M)$. By the Riesz representation theorem, there is therefore a unique $u\in \overline{P_\varphi^*C_0^\infty(M^{\text{int}})}$ such that 
\[
L(g)=(g,u)_{L^2(M)}, \quad g\in \overline{P_\varphi^*C_0^\infty(M^{\text{int}})},
\]
and $u$ satisfies the bound \eqref{eq_2_3}. In particular, for any $w\in C^\infty_0(M^{\text{int}})$, we have 
\[
L(P_\varphi^* w)=(w,v)_{L^2(M)}= (P_\varphi^* w, u)_{L^2(M)}, 
\]
showing that $P_\varphi u=v$. Furthermore,  we clearly have $(P_\varphi^*C_0^\infty(M^{\text{int}}))^\perp=\text{Ker}(P_\varphi)$, which is equivalent to  
\begin{equation}
\label{eq_2_3_1}
\overline{P_\varphi^*C_0^\infty(M^{\text{int}})}=(\text{Ker}(P_\varphi))^\perp.
\end{equation}
To see the uniqueness, let $u, \tilde u\in  (\text{Ker}(P_\varphi))^\perp $ be solutions to \eqref{eq_2_2}. Then $u-\tilde u\in \text{Ker}(P_\varphi)\cap (\text{Ker}(P_\varphi))^\perp=\{0\}$. 
\end{proof}

Following \cite{Bukhgeim_Uhlmann_2002}, we introduce the Hilbert space 
\[
H_\Delta(M)=\{u\in L^2(M): \Delta u\in L^2(M)\},
\]
the maximal domain of the Laplacian, equipped with the norm
\[
\|u\|_{H_\Delta(M)}^2=\|u\|_{L^2(M)}^2+ \|\Delta u\|_{L^2(M)}^2. 
\]

We have the following result on the existence of  Green's operators, see \cite[Theorem 3.2]{Nachman_Street_2010}.  
\begin{thm}
\label{thm_green_operators}
Let $\varphi\in C^\infty(U;\R)$ be a limiting Carleman weight for $-h^2\Delta$ on $(U,g)$. Then for all $0<h\ll 1$, there exists a linear continuous operator $G_{\varphi}:L^2(M)\to L^2(M)$ such that 
\begin{itemize}
\item[(i)] $P_{\varphi} G_{ \varphi}=I$ on $L^2(M)$, 
\item[(ii)] $\|G_{ \varphi}\|_{\mathcal{L}(L^2(M), L^2(M))}=\mathcal{O}(h^{-1})$, 
\item[(iii)] $G_{ \varphi}:L^2(M)\to e^{\frac{\varphi}{h}}H_{\Delta}(M)$,
\item[(iv)] $G_\varphi^*=G_{-\varphi}$,
\item[(v)] $G_{ \varphi} P_{\varphi}=I$ on $C^\infty_0(M^{\text{int}})$. 
\end{itemize}
\end{thm}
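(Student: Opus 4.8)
The plan is to assemble $G_\varphi$ from two partial solution operators for $P_\varphi$: one that inverts $P_\varphi$ exactly on test functions (this yields (v)), and one supplied by Proposition~\ref{prop_solvab} that handles the orthogonal complement of the first operator's range (this yields surjectivity onto $L^2$). Concretely, set $R:=\overline{P_\varphi\,C^\infty_0(M^{\text{int}})}\subset L^2(M)$. The Carleman estimate \eqref{eq_2_1} makes $P_\varphi$ injective on $C^\infty_0(M^{\text{int}})$ with $\|\psi\|_{L^2(M)}\le \frac{C}{h}\|P_\varphi\psi\|_{L^2(M)}$, so the algebraic inverse $P_\varphi C^\infty_0(M^{\text{int}})\to C^\infty_0(M^{\text{int}})$ is bounded by $C/h$ and extends by continuity to $T_\varphi:R\to L^2(M)$ with $P_\varphi T_\varphi=I$ on $R$ (the equation passing to the limit in $\mathcal{D}'(M^{\text{int}})$) and $T_\varphi P_\varphi=I$ on $C^\infty_0(M^{\text{int}})$. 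Since $P_\varphi$ has real coefficients (visible from \eqref{eq_Car_-2}) and $P_\varphi^*=P_{-\varphi}$, one computes $R^\perp=\{f\in L^2(M):P_{-\varphi}f=0\}=\text{Ker}(P_{-\varphi})$. On $R^\perp$ I would use Proposition~\ref{prop_solvab}: let $R_\varphi:L^2(M)\to(\text{Ker}(P_\varphi))^\perp$ send $v$ to the unique solution of \eqref{eq_2_2} lying in $(\text{Ker}(P_\varphi))^\perp$; by uniqueness it is linear, $P_\varphi R_\varphi=I$, and $\|R_\varphi\|\le C/h$ by \eqref{eq_2_3}. Then I define
\[
G_\varphi:=T_\varphi\,\Pi_{R}+R_\varphi\,\Pi_{R^\perp},
\]
with $\Pi_R,\Pi_{R^\perp}$ the orthogonal projections of $L^2(M)$ onto $R$ and $R^\perp$. (Using $R_\varphi$ alone would not do: it fails (v), because $C^\infty_0(M^{\text{int}})\not\subset(\text{Ker}(P_\varphi))^\perp$ — e.g. $e^{\varphi/h}\in\text{Ker}(P_\varphi)$ — which is exactly why the $T_\varphi$-piece is needed.)

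With $G_\varphi$ so defined, (i), (ii), (v) are immediate: (i) because $P_\varphi T_\varphi=I$ on $R$ and $P_\varphi R_\varphi=I$ on $R^\perp$; (ii) by combining the two $C/h$ bounds with $\|\Pi_R v\|^2+\|\Pi_{R^\perp}v\|^2=\|v\|^2$; (v) because $P_\varphi\psi\in R$ and $G_\varphi(P_\varphi\psi)=T_\varphi P_\varphi\psi=\psi$ for $\psi\in C^\infty_0(M^{\text{int}})$. For (iii), rewriting $P_\varphi(G_\varphi v)=v$ via \eqref{eq_Car_-2} gives $-h^2\Delta\big(e^{-\varphi/h}G_\varphi v\big)=e^{-\varphi/h}v\in L^2(M)$, hence $e^{-\varphi/h}G_\varphi v\in H_\Delta(M)$, i.e. $G_\varphi v\in e^{\varphi/h}H_\Delta(M)$.

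The substantive step is (iv), $G_\varphi^*=G_{-\varphi}$, where the two-piece structure is used in an essential way. I would prove $(G_\varphi v,w)_{L^2(M)}=(v,G_{-\varphi}w)_{L^2(M)}$ by decomposing $v=v'+v''$ along $L^2(M)=R\oplus R^\perp$ and $w=w'+w''$ along $L^2(M)=\overline{P_{-\varphi}C^\infty_0(M^{\text{int}})}\oplus\text{Ker}(P_\varphi)$ — the latter being the analogue of the former for $-\varphi$, so that $G_{-\varphi}w=T_{-\varphi}w'+R_{-\varphi}w''$, and recalling $\overline{P_{-\varphi}C^\infty_0(M^{\text{int}})}=(\text{Ker}(P_\varphi))^\perp$ and $R^\perp=\text{Ker}(P_{-\varphi})$. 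Bilinearity then produces four pairings of $G_\varphi v=T_\varphi v'+R_\varphi v''$ against $w'+w''$. The basic tool is the integration-by-parts identity $(P_\varphi\psi,g)_{L^2(M)}=(\psi,P_{-\varphi}g)_{L^2(M)}$ for $\psi\in C^\infty_0(M^{\text{int}})$ and $g\in L^2(M)$ with $P_{-\varphi}g\in L^2(M)$; running $\psi$ along a sequence defining $T_\varphi$ upgrades this to $(f,g)_{L^2(M)}=(T_\varphi f,P_{-\varphi}g)_{L^2(M)}$ for $f\in R$, and symmetrically with $\varphi$ replaced by $-\varphi$. Applying it with $g=T_{-\varphi}w'$, $g=R_{-\varphi}w''$, and $g=R_\varphi v''$ converts $(T_\varphi v',w')$, $(T_\varphi v',w'')$, $(R_\varphi v'',w')$ into $(v',T_{-\varphi}w')$, $(v',R_{-\varphi}w'')$, $(v'',T_{-\varphi}w')$, while the remaining pairing $(R_\varphi v'',w'')$ vanishes since $R_\varphi v''\in(\text{Ker}(P_\varphi))^\perp$ and $w''\in\text{Ker}(P_\varphi)$; as also $(v'',R_{-\varphi}w'')=0$ (because $v''\in\text{Ker}(P_{-\varphi})$ and $R_{-\varphi}w''\in(\text{Ker}(P_{-\varphi}))^\perp$), the four terms reassemble precisely into $(v,G_{-\varphi}w)$.

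The main obstacle I anticipate is making the limiting argument in step (iv) airtight. Elements of $R$ are only $L^2$-limits of $P_\varphi$ applied to test functions, so one must ensure the integration-by-parts identity survives passage to the limit — and it is precisely for this that one pairs against a $g$ with $P_{-\varphi}g\in L^2(M)$ rather than merely $g\in L^2(M)$, since then $(\psi_k,P_{-\varphi}g)_{L^2(M)}$ is an honest inner product, continuous in $\psi_k\to T_\varphi f$. The same care underlies the identification $R^\perp=\text{Ker}(P_{-\varphi})$, where the real-coefficient property of $P_\varphi$ is what permits passing freely between its formal transpose and its formal $L^2$-adjoint. Everything else is bookkeeping.
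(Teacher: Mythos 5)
Your proof is correct, and it reaches the theorem by a genuinely different construction than the paper's. The paper builds a single minimal-norm solution operator $H_\varphi:L^2(M)\to(\text{Ker}(P_\varphi))^\perp$ (your $R_\varphi$) from Proposition~\ref{prop_solvab}, notes that it already gives (i)--(iii), and then repairs (iv) by adding the correction $\pi_\varphi H_{-\varphi}^*$ supported on $\text{Ker}(P_\varphi)$; the heart of that argument is the identity $T_\varphi^*=T_{-\varphi}$ for $T_\varphi=H_\varphi(1-\pi_{-\varphi})$, and property (v) is then deduced a posteriori from (i) and (iv) by duality, $(G_\varphi P_\varphi f,g)=(f,P_{-\varphi}G_{-\varphi}g)=(f,g)$. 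You instead decompose the domain, $L^2(M)=R\oplus R^\perp$ with $R=\overline{P_\varphi C_0^\infty(M^{\text{int}})}$ and $R^\perp=\text{Ker}(P_{-\varphi})$, use on $R$ the $L^2$-closure of the literal inverse $P_\varphi\psi\mapsto\psi$ (so that (v) holds by construction) and on $R^\perp$ the operator of Proposition~\ref{prop_solvab}; your four-term verification of (iv), resting on the identity $(f,g)=(T_\varphi f,P_{-\varphi}g)$ for $f\in R$ and $P_{-\varphi}g\in L^2(M)$ together with the vanishing of the two diagonal pairings by orthogonality, is sound, and you correctly defuse the limiting issue by always pairing against functions whose image under $P_{\mp\varphi}$ lies in $L^2(M)$. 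The paper's route has the advantage that only one solution operator (and its adjoint) is ever constructed, with the projections entering only through the correction term; yours has the advantage that (i), (ii) and (v) are immediate from the definition and the adjoint property reduces to symmetric bookkeeping of four inner products. Your side remark that the minimal-norm operator alone cannot satisfy (v) (since, e.g., $e^{\varphi/h}\in\text{Ker}(P_\varphi)$, so $C_0^\infty(M^{\text{int}})\not\subset(\text{Ker}(P_\varphi))^\perp$) is also correct and explains why some modification of Proposition~\ref{prop_solvab} is unavoidable.
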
 

\begin{proof}

We follow \cite{Nachman_Street_2010} and present the proof for the completeness and convenience of the reader. First we define the following solution operator to the equation \eqref{eq_2_2}, 
\[
H_\varphi:L^2(M)\to \overline{P_\varphi^*C_0^\infty(M^{\text{int}})}=(\text{Ker}(P_\varphi))^\perp, \quad H_\varphi(v)=u,
\]
where $u\in \overline{P_\varphi^*C_0^\infty(M^{\text{int}})}$ is the unique solution to the equation \eqref{eq_2_2}, see Proposition \ref{prop_solvab}. It is clear that $H_{ \varphi}$ satisfies the properties (i)--(iii) of Theorem \ref{thm_green_operators}. However, to achieve the property (iv) we shall make a suitable modification of $H_{\varphi}$. 

In doing so, let $1-\pi_{\varphi}$ be the orthogonal projection onto $\overline{P_\varphi^*C_0^\infty(M^{\text{int}})}\subset L^2(M)$. Then we claim that $\pi_\varphi$ is the orthogonal projection onto $\text{Ker}(P_\varphi)$.  Indeed, we have 
\[
\text{Ran}(\pi_\varphi)=(\text{Ker}(\pi_\varphi))^\perp,
\]
and therefore, it suffices to show that 
\[
\text{Ker}(\pi_\varphi)=(\text{Ker}(P_\varphi))^\perp.
\]
This follows from the fact that 
\[
\text{Ker}(\pi_\varphi)=\text{Ran}(1-\pi_\varphi)=\overline{P_\varphi^*C_0^\infty(M^{\text{int}})}
\]
and \eqref{eq_2_3_1}. This completes the proof of the claim. 

Let 
\begin{equation}
\label{eq_2_4_-1}
T_\varphi=H_\varphi(1-\pi_{-\varphi}), \quad T_{-\varphi}=H_{-\varphi}(1-\pi_{\varphi}).
\end{equation}
We claim that 
\begin{equation}
\label{eq_2_4}
T_\varphi^*=T_{-\varphi}. 
\end{equation}
Indeed, first, we have 
\[
T_{-\varphi}\pi_\varphi=0, \quad 
T^*_{\varphi}\pi_\varphi=(1-\pi_{-\varphi})(\pi_\varphi H_\varphi)^*=0.
\]
As $\text{Ran}(1-\pi_\varphi)=\overline{P_\varphi^*C_0^\infty(M^{\text{int}})}$, to prove \eqref{eq_2_4} it suffices to show that 
\begin{equation}
\label{eq_2_5}
T^*_\varphi P^*_\varphi w=T_{-\varphi}P^*_\varphi w = H_{-\varphi}P^*_\varphi w, 
\end{equation}
for all $w\in C^\infty_0(M^{\text{int}})$. To see \eqref{eq_2_5}, we first observe that for any $x\in L^2(M)$, 
\[
(T^*_\varphi P^*_\varphi w, \pi_{-\varphi}x)_{L^2(M)}= (H^*_\varphi P^*_\varphi w, (1-\pi_{-\varphi})\pi_{-\varphi}x)_{L^2(M)}=0,
\]
and 
\[
(H_{-\varphi}P^*_\varphi w,  \pi_{-\varphi}x)_{L^2(M)}=(\pi_{-\varphi} H_{-\varphi}P^*_\varphi w,  x)_{L^2(M)}=0.
\]
As $\text{Ran}(1-\pi_{-\varphi})=\overline{P_{\varphi}C_0^\infty(M^{\text{int}})}$, to complete the  proof of \eqref{eq_2_5} it remains to check that 
\begin{equation}
\label{eq_2_6}
(T^*_\varphi P^*_\varphi w, P_\varphi g)_{L^2(M)}=(H_{-\varphi}P^*_\varphi w, P_\varphi g)_{L^2(M)}
\end{equation}
for all $g\in C^\infty_0(M^{\text{int}})$.  Integrating by parts, and using that $P_{-\varphi}H_{-\varphi}=1$ on $L^2(M)$,  we get   
\begin{equation}
\label{eq_2_7}
(H_{-\varphi}P^*_\varphi w, P_\varphi g)_{L^2(M)}= (P_{-\varphi} H_{-\varphi}P^*_\varphi w,  g)_{L^2(M)}=(P^*_\varphi w,  g)_{L^2(M)}.
\end{equation}
On the other hand, we obtain that 
\begin{equation}
\label{eq_2_8}
\begin{aligned}
(T^*_\varphi P^*_\varphi w, P_\varphi g)_{L^2(M)}= (H_\varphi^*P^*_\varphi w, (1-\pi_{-\varphi})P_\varphi g)_{L^2(M)}\\
=(w, P_\varphi H_\varphi P_\varphi g)_{L^2(M)}= (P^*_\varphi w,g)_{L^2(M)}.
\end{aligned}
\end{equation}
It follows from \eqref{eq_2_7} and \eqref{eq_2_8} that \eqref{eq_2_6} holds. This complete the proof of \eqref{eq_2_4}.

Finally, let us define 
\[
G_\varphi=H_\varphi+\pi_\varphi H^*_{-\varphi}, \quad G_{-\varphi}=H_{-\varphi}+\pi_{-\varphi}H^*_{\varphi}. 
\]
It is clear that $G_{\pm \varphi}$ satisfies all the properties (i)--(iii) in Theorem \ref{thm_green_operators}. To see the property (iv), using \eqref{eq_2_4_-1} and \eqref{eq_2_4}, we get 
\[
G_\varphi^*=H_\varphi^*+H_{-\varphi}\pi_\varphi= (T_\varphi+H_\varphi\pi_{-\varphi})^*+H_{-\varphi}-T_{-\varphi}=G_{-\varphi}.
\]
To see the property (v), letting  $f,g\in C^\infty_0(M^{\text{int}})$, we obtain that 
\[
(G_\varphi P_\varphi f,g)_{L^2(M)}=(P_\varphi f, G_{-\varphi} g)_{L^2(M)}=( f, P_{-\varphi}G_{-\varphi} g)_{L^2(M)}=(f,g)_{L^2(M)}.
\]
This completes the proof of Theorem \ref{thm_green_operators}.
\end{proof}

Let $\gamma: C^\infty(M)\to C^\infty(\p M)$, $\gamma(u)=u|_{\p M}$ be the trace map. It is shown in \cite{Bukhgeim_Uhlmann_2002},  see also \cite[Section 26.2]{Eskin_book}, that the trace map $\gamma$ extends to a continuous map 
\begin{equation}
\label{eq_gamma}
\gamma: H_\Delta(M)\to H^{-\frac{1}{2}}(\p M).
\end{equation}
We claim that the map $\gamma$ in \eqref{eq_gamma} is surjective. This follows from the fact that if $g\in H^{-\frac{1}{2}}(\p M)$, there exists a unique $u\in L^2(M)$ such that $-\Delta u=0$ in $M$ and $\gamma(u)=g$, see \cite[Theorem 26.3]{Eskin_book}. Furthermore, we have 
\begin{equation}
\label{eq_ref_Eskin_1}
\|u\|_{L^2(M)}\le C\|g\|_{H^{-\frac{1}{2}}(\p M)},
\end{equation}
see \cite[Theorem 26.3]{Eskin_book}.

\textbf{Remark}.  \emph{The space 
\[
\mathcal{H}(\p M)=\{\gamma(u): u\in H_\Delta(M)\}\subset H^{-\frac{1}{2}}(\p M)
\]
was introduced in \cite{Nachman_Street_2010}. The discussion above shows that in fact we have 
\[
\mathcal{H}(\p M)=H^{-\frac{1}{2}}(\p M).
\]}

Let $q\in L^\infty(M)$ and let us assume from here on that $0$ is not a Dirichlet eigenvalue of $-\Delta+q$, so that 
$-\Delta+q:(H^2\cap H^1_0)(M^{\text{int}})\to L^2(M)$ 
is bijective. 
Setting
\[
b_q:=\{u\in L^2(M): (-\Delta+q)u=0\}\subset H_\Delta(M), 
\]
we claim that the trace map 
\[
\gamma:b_q\to H^{-\frac{1}{2}}(\p M)
\]
 is bijective. 
Indeed, to see the surjectivity of $\gamma$, let $g\in H^{-\frac{1}{2}}(\p M)$. By \cite[Theorem 26.3]{Eskin_book},  there exists  $u\in L^2(M)$ such that $-\Delta u=0$ in $M^{\text{int}}$ and $\gamma(u)=g$.  As $0$ is not a Dirichlet eigenvalue of $-\Delta+q$, the Dirichlet problem, 
\[
\begin{cases} (-\Delta +q)v=qu & \text{in}\quad M^{\text{int}},\\
\gamma(v)=0,
\end{cases}
\]
has a unique solution $v\in (H^2\cap H^1_0)(M^{\text{int}})$.  Letting $w=u-v\in L^2(M)$, we see that $w\in b_q$ and $\gamma(w)=g$.   Note also that 
 \begin{equation}
\label{eq_ref_Eskin_2}
\|w\|_{L^2(M)}\le C\|g\|_{H^{-\frac{1}{2}}(\p M)}.
\end{equation}
To see the injectivity of $\gamma$, let $u\in b_q$ be such that $\gamma(u)=0$. Then $u\in (H^2\cap H^1_0)(M^{\text{int}})$, see \cite{Bukhgeim_Uhlmann_2002}, and therefore, $u=0$. This completes the proof of the claim.

Next we shall define suitable single layer operators associated to the Green operator $G_\varphi$. To that end, we first note that the trace map
\[
\gamma: e^{\frac{\pm \varphi}{h}}H_{\Delta}(M)\to e^{\frac{\pm \varphi}{h}}H^{-\frac{1}{2}}(\p M)=H^{-\frac{1}{2}}(\p M)
\]
is continuous. Thus, it follows from Theorem \ref{thm_green_operators} (iii) that the map
\[
\gamma\circ G_\varphi :L^2(M)\to H^{-\frac{1}{2}}(\p M)
\]
is continuous.  Thus, the $L^2$--adjoint 
\[
(\gamma\circ G_\varphi)^* :H^{\frac{1}{2}}(\p M)\to L^2(M)
\]
is also continuous.   Let $h\in H^{\frac{1}{2}}(\p M)$. We claim that 
\begin{equation}
\label{eq_2_9}
P_{-\varphi}((\gamma\circ G_\varphi)^* h)=0\quad \text{in}\quad \mathcal{D}'(M^{\text{int}}). 
\end{equation}
To see the claim let $f\in C_0^\infty(M^{\text{int}})$. Using Theorem \ref{thm_green_operators} (v), we get
\[
(P_{-\varphi}((\gamma\circ G_\varphi)^* h), f)_{L^2(M)}=(h,(\gamma\circ G_\varphi)P_\varphi f )_{H^{\frac{1}{2}}(\p M), H^{-\frac{1}{2}}(\p M)}=0,
\] 
showing \eqref{eq_2_9}. Now \eqref{eq_2_9} implies that for any $h\in H^{\frac{1}{2}}(\p M)$, 
$ (\gamma\circ G_\varphi)^* h\in e^{-\frac{\varphi}{h}} H_{\Delta}(M)$,
and therefore, the map
\[
(\gamma\circ G_\varphi)^*: H^{\frac{1}{2}}(\p M)\to e^{-\frac{\varphi}{h}} H_{\Delta}(M)
\]
is bounded. Thus, the map
\[
\gamma\circ (\gamma\circ G_\varphi)^*: H^{\frac{1}{2}}(\p M) \to H^{-\frac{1}{2}}(\p M)
\]
is well defined and bounded. Its $L^2$--adjoint 
\[
(\gamma\circ (\gamma\circ G_\varphi)^*)^*: H^{\frac{1}{2}}(\p M)\to H^{-\frac{1}{2}}(\p M)
\]
is bounded. 
We define the single layer operator by 
\begin{equation}
\label{eq_2_10}
S_\varphi= e^{-\frac{\varphi}{h}}(\gamma\circ (\gamma\circ G_\varphi)^*)^* e^{\frac{\varphi}{h}}\in \mathcal{L}(H^{\frac{1}{2}}(\p M), H^{-\frac{1}{2}}(\p M)). 
\end{equation}

Using the definition \eqref{eq_int_2_DN} of the Dirichlet--to--Neumann map, for $f,k\in H^{\frac{1}{2}}(\p M)$, we get
\begin{equation}
\label{eq_2_11}
\langle (\Lambda_{g,q}-\Lambda_{g,0})f, k\rangle_{H^{-\frac{1}{2}}(\p M), H^{\frac{1}{2}}(\p M)}=\int_M q u_1 u_2dV_g,
\end{equation}
where 
$u_1, u_2\in H^1(M^{\text{int}})$ are  such that 
\begin{equation}
\label{eq_3_2}
\begin{cases}
(-\Delta+q)u_1=0& \text{in} \quad M^{\text{int}},\\
u_1|_{\p M}=f, 
\end{cases}
\end{equation}
and 
\begin{equation}
\label{eq_3_3}
\begin{cases}
-\Delta u_2=0& \text{in} \quad M^{\text{int}},\\
u_2|_{\p M}=k.
\end{cases}
\end{equation}

We claim that $\Lambda_{g,q}-\Lambda_{g,0}$ extends to a linear continuous map $H^{-\frac{1}{2}}(\p M) \to H^{\frac{1}{2}}(\p M)$. Indeed, it follows  \eqref{eq_ref_Eskin_1} and \eqref{eq_ref_Eskin_2} that  
\begin{align*}
\big| \langle (\Lambda_{g,q}-\Lambda_{g,0})f, k\rangle_{H^{-\frac{1}{2}}(\p M), H^{\frac{1}{2}}(\p M)}\big|&\le C \|u_1\|_{L^2(M)} \| u_2\|_{L^2(M)}\\
&\le C\|f\|_{H^{-\frac{1}{2}}(\p M)}\|k\|_{H^{-\frac{1}{2}}(\p M)},
\end{align*}
and therefore, by density of $H^{\frac{1}{2}}(\p M)$ in $H^{-\frac{1}{2}}(\p M)$, we see that 
\[
\|(\Lambda_{g,q}-\Lambda_{g,0})f\|_{H^{\frac{1}{2}}(\p M)}\le C \|f\|_{H^{-\frac{1}{2}}(\p M)}.
\]
The claim follows.  Combining the claim with \eqref{eq_ref_Eskin_1},  \eqref{eq_ref_Eskin_2}, we see that the integral identity 
\eqref{eq_2_11} extends to all $f,k\in H^{-\frac{1}{2}}(\p M)$ with the corresponding solutions $u_1\in b_q$, $u_2\in b_0$,  and we obtain that 
\begin{equation}
\label{eq_2_11_new_ex}
\langle (\Lambda_{g,q}-\Lambda_{g,0})f, k\rangle_{H^{\frac{1}{2}}(\p M), H^{-\frac{1}{2}}(\p M)}=\int_M q u_1 u_2dV_g. 
\end{equation}

Now consider the map $S_\varphi(\Lambda_{g,q}-\Lambda_{g,0})\in \mathcal{L}(H^{-\frac{1}{2}}(\p M),H^{-\frac{1}{2}}(\p M))$.  Let 
\begin{equation}
\label{eq_Poisson}
\mathcal{P}_q=\gamma^{-1}: H^{-\frac{1}{2}}(\p M)\to b_q
\end{equation}
 be the Poisson operator. We claim that 
\begin{equation}
\label{eq_2_12}
S_\varphi(\Lambda_{g, q}-\Lambda_{g,0})=\gamma\circ e^{-\frac{\varphi}{h}}\circ G_\varphi\circ e^{\frac{\varphi}{h}} \circ q\circ \mathcal{P}_q
\end{equation}
in the sense of linear continuous maps:  $H^{-\frac{1}{2}}(\p M)\to H^{-\frac{1}{2}}(\p M)$. 
Indeed, let $f,k\in C^\infty(\p M)$ and note that in view of \eqref{eq_2_9}, the function $e^{\frac{\varphi}{h}}(\gamma\circ G_\varphi)^*e^{-\frac{\varphi}{h}}k\in L^2(M)$ is harmonic. Then using \eqref{eq_2_11_new_ex} and \eqref{eq_2_10},   we get 
\begin{align*}
(\gamma\circ e^{-\frac{\varphi}{h}}\circ &G_\varphi\circ e^{\frac{\varphi}{h}} \circ q\circ \mathcal{P}_qf, k)_{H^{-\frac{1}{2}}(\p M), H^{\frac{1}{2}}(\p M)}\\
&=
((\gamma\circ G_\varphi)\circ e^{\frac{\varphi}{h}} \circ q\circ \mathcal{P}_qf, e^{-\frac{\varphi}{h}} k)_{H^{-\frac{1}{2}}(\p M), H^{\frac{1}{2}}(\p M)}\\
&=( q\circ \mathcal{P}_qf, e^{\frac{\varphi}{h}}(\gamma\circ G_\varphi)^*e^{-\frac{\varphi}{h}} k)_{L^2(M)}\\
&=( (\Lambda_{g,q}-\Lambda_{g,0})f,  \gamma \circ e^{\frac{\varphi}{h}}(\gamma\circ G_\varphi)^*e^{-\frac{\varphi}{h}} k)_{H^{\frac{1}{2}}(\p M), H^{-\frac{1}{2}}(\p M)}\\
&=(S_\varphi (\Lambda_{g,q}-\Lambda_{g,0}) f, k )_{H^{-\frac{1}{2}}(\p M), H^{\frac{1}{2}}(\p M)}. 
\end{align*}
Thus, \eqref{eq_2_12} follows. 

\begin{prop}
\label{prop_integral_eq}
Let $k,f\in H^{-\frac{1}{2}}(\p M)$. Then 
\begin{equation}
\label{eq_2_13}
(1+ h^2S_\varphi(\Lambda_{g,q}-\Lambda_{g,0}))k=f
\end{equation}
if and only if 
\begin{equation}
\label{eq_2_14}
(1+ e^{-\frac{\varphi}{h}}\circ G_\varphi\circ e^{\frac{\varphi}{h}}  h^2q) \mathcal{P}_q k=\mathcal{P}_0 f. 
\end{equation}
\end{prop}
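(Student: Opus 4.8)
The plan is to derive both implications simultaneously from the operator identity \eqref{eq_2_12}, from the conjugation formula $P_\varphi = e^{\frac{\varphi}{h}}(-h^2\Delta)e^{-\frac{\varphi}{h}}$ in \eqref{eq_Car_-2}, and from the fact, established just before \eqref{eq_Poisson}, that the trace map restricts to a bijection $\gamma\colon b_0\to H^{-\frac{1}{2}}(\p M)$ with inverse $\mathcal{P}_0$.

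First I would set $u_1:=\mathcal{P}_q k\in b_q$, so that $(-\Delta+q)u_1=0$ and $\gamma(u_1)=k$, and introduce
\[
v:=e^{-\frac{\varphi}{h}}\,G_\varphi\,e^{\frac{\varphi}{h}}\,h^2 q\,u_1.
\]
This is well defined: $e^{\frac{\varphi}{h}}h^2 q\,u_1\in L^2(M)$ since $q\in L^\infty(M)$ and $u_1\in L^2(M)$, and Theorem \ref{thm_green_operators}(iii) gives $v\in H_\Delta(M)$. The one computation to carry out is that $v$ solves $-\Delta v=q u_1$ in $M^{\text{int}}$: applying $P_\varphi G_\varphi=I$ to $e^{\frac{\varphi}{h}}h^2 q u_1$ gives $P_\varphi(e^{\frac{\varphi}{h}}v)=e^{\frac{\varphi}{h}}h^2 q u_1$, and since $P_\varphi(e^{\frac{\varphi}{h}}v)=e^{\frac{\varphi}{h}}(-h^2\Delta)v$ by \eqref{eq_Car_-2}, cancelling $h^2 e^{\frac{\varphi}{h}}$ yields the claim. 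Setting $w:=u_1+v=(1+e^{-\frac{\varphi}{h}}G_\varphi e^{\frac{\varphi}{h}}h^2 q)\mathcal{P}_q k$ and using $-\Delta u_1=-qu_1$, we get $-\Delta w=0$, i.e.\ $w\in b_0$.

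Next I would compute the boundary trace of $w$. By construction $\gamma(w)=\gamma(u_1)+\gamma(v)=k+\gamma(v)$, and applying the operator identity \eqref{eq_2_12} to $k$, with the scalar $h^2$ commuting through, gives $\gamma(v)=h^2 S_\varphi(\Lambda_{g,q}-\Lambda_{g,0})k$; hence
\[
\gamma(w)=\bigl(1+h^2 S_\varphi(\Lambda_{g,q}-\Lambda_{g,0})\bigr)k.
\]
Since $w\in b_0$ and $\gamma|_{b_0}$ is a bijection onto $H^{-\frac{1}{2}}(\p M)$ with inverse $\mathcal{P}_0$, the identity $\gamma(w)=f$, which is precisely \eqref{eq_2_13}, is equivalent to $w=\mathcal{P}_0 f$, which is precisely \eqref{eq_2_14}. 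This proves the proposition in both directions at once.

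I do not expect a genuine obstacle here: the proof is essentially an unwinding of the definitions built up in this section. The two points deserving a little care are the identity $-\Delta v=qu_1$ --- that is, the bookkeeping between $G_\varphi$, the conjugation by $e^{\pm\varphi/h}$, and the factor $h^2$ --- and checking that every object stays in the function-space setting in which \eqref{eq_2_12} and the trace extension \eqref{eq_gamma} were established, which is where $q\in L^\infty(M)$ and the $L^2$-boundedness of $G_\varphi$ and of multiplication by $e^{\pm\varphi/h}$ are used.
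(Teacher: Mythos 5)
Your proof is correct and follows essentially the same route as the paper: both arguments show that $(1+e^{-\varphi/h}G_\varphi e^{\varphi/h}h^2q)\mathcal{P}_qk$ is harmonic via Theorem~\ref{thm_green_operators}(i), identify its boundary trace as $(1+h^2S_\varphi(\Lambda_{g,q}-\Lambda_{g,0}))k$ using \eqref{eq_2_12}, and conclude by the unique solvability of the Dirichlet problem for harmonic functions. Your write-up merely packages the two implications into a single equivalence and spells out the conjugation bookkeeping that the paper leaves implicit.
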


\begin{proof}
Assume first  that \eqref{eq_2_13} holds.  Applying $-h^2\Delta$ to the left hand side of \eqref{eq_2_14} and using Theorem \ref{thm_green_operators} (i), we get 
\[
(-h^2\Delta)(1+ e^{-\frac{\varphi}{h}}\circ G_\varphi\circ e^{\frac{\varphi}{h}}  h^2q) \mathcal{P}_q k=0. 
\]
Furthermore, using \eqref{eq_2_12} and \eqref{eq_2_13}, we see that 
\[
\gamma (1+ e^{-\frac{\varphi}{h}}\circ G_\varphi\circ e^{\frac{\varphi}{h}}  h^2q) \mathcal{P}_q k= k+h^2S_\varphi(\Lambda_{g,q}-\Lambda_{g,0})k=f.
\] 
Hence,  \eqref{eq_2_14}  follows. Assume now that  \eqref{eq_2_14} holds. Then taking trace in  \eqref{eq_2_14}, we obtain  \eqref{eq_2_13}.  
 \end{proof}

\begin{prop}
\label{prop_isomorphism}
The map $1+ h^2S_\varphi(\Lambda_{g,q}-\Lambda_{g,0}): H^{-\frac{1}{2}}(\p M)\to H^{-\frac{1}{2}}(\p M)$ is a linear homemorphism for all $0<h\ll 1$. 
\end{prop}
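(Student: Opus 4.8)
The plan is to establish that $1 + h^2 S_\varphi(\Lambda_{g,q} - \Lambda_{g,0})$ is both injective and surjective on $H^{-\frac{1}{2}}(\p M)$, exploiting the equivalence in Proposition \ref{prop_integral_eq} to transfer the question from the boundary to the interior, where the Poisson operators $\mathcal{P}_q$ and $\mathcal{P}_0$ and the Green operator $G_\varphi$ control everything. Concretely, I would argue as follows.

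\emph{Injectivity.} Suppose $k \in H^{-\frac{1}{2}}(\p M)$ satisfies $(1 + h^2 S_\varphi(\Lambda_{g,q} - \Lambda_{g,0}))k = 0$. By Proposition \ref{prop_integral_eq} (with $f = 0$), this is equivalent to $(1 + e^{-\frac{\varphi}{h}} G_\varphi e^{\frac{\varphi}{h}} h^2 q)\mathcal{P}_q k = \mathcal{P}_0 (0) = 0$. Set $u = \mathcal{P}_q k \in b_q$, so $(-\Delta + q)u = 0$ in $M^{\text{int}}$, and the identity reads $u = -e^{-\frac{\varphi}{h}} G_\varphi e^{\frac{\varphi}{h}} h^2 q u$. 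Applying $\gamma$ gives $k = \gamma(u) = 0$ directly (since the right side, after the trace, is $-h^2 S_\varphi(\Lambda_{g,q}-\Lambda_{g,0})k$, which equals $k$ only if $k=0$ by the very assumption) — more cleanly: the equivalence in Proposition \ref{prop_integral_eq} already says $k = \gamma(\mathcal{P}_q k)$ and $(1 + h^2 S_\varphi(\Lambda_{g,q}-\Lambda_{g,0}))k$ is the trace of the left side of \eqref{eq_2_14}, which is $\mathcal{P}_0 f$ here with $f = 0$, hence $\mathcal{P}_0 f = 0$ and therefore $f = \gamma(\mathcal{P}_0 f) = 0$; but $f$ was the image, so $k$ maps to $0$ forces nothing yet. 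So instead I use the interior identity: $\mathcal{P}_q k$ solves $(-\Delta+q)\mathcal{P}_q k = 0$ and equals $-h^2 e^{-\frac{\varphi}{h}}G_\varphi e^{\frac{\varphi}{h}}(q\mathcal{P}_q k)$, which lies in $e^{-\frac{\varphi}{h}}H_\Delta(M)$, and one shows via the Carleman estimate \eqref{eq_2_3} that $\|\mathcal{P}_q k\|_{L^2(M)} \le \frac{C}{h}\|h^2 q \mathcal{P}_q k\|_{L^2(M)} = Ch\|q\|_{L^\infty}\|\mathcal{P}_q k\|_{L^2(M)}$, so for $h$ small enough $\mathcal{P}_q k = 0$, hence $k = \gamma(\mathcal{P}_q k) = 0$.

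\emph{Surjectivity and boundedness of the inverse.} Given $f \in H^{-\frac{1}{2}}(\p M)$, by Proposition \ref{prop_integral_eq} it suffices to solve \eqref{eq_2_14}. Let $v = \mathcal{P}_0 f \in b_0$ be the harmonic extension of $f$ (with $\|v\|_{L^2(M)} \le C\|f\|_{H^{-\frac{1}{2}}}$ by \eqref{eq_ref_Eskin_1}). We seek $u \in b_q$ with $(1 + e^{-\frac{\varphi}{h}}G_\varphi e^{\frac{\varphi}{h}} h^2 q)u = v$; then $k = \gamma(u)$ will be the desired preimage. Write this as a fixed-point problem on $L^2(M)$: the operator $A_h := e^{-\frac{\varphi}{h}} G_\varphi e^{\frac{\varphi}{h}} h^2 q$ has, by Theorem \ref{thm_green_operators}(ii) together with boundedness of multiplication by $e^{\pm\varphi/h}$ on $L^2$, operator norm $\le C h^2 \cdot h^{-1} \cdot \|q\|_{L^\infty} = Ch\|q\|_{L^\infty} < 1$ for $0 < h \ll 1$; hence $1 + A_h$ is invertible on $L^2(M)$ by a Neumann series, with inverse bounded by $(1 - Ch)^{-1}$. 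Set $u := (1+A_h)^{-1}v$. One checks $u \in b_q$: indeed $(-\Delta+q)u = (-\Delta+q)v - (-\Delta+q)A_h u$; since $-h^2\Delta(e^{-\frac{\varphi}{h}}G_\varphi e^{\frac{\varphi}{h}}w) = w$ for $w \in L^2$ by Theorem \ref{thm_green_operators}(i) and conjugation \eqref{eq_Car_-2}, one gets $-\Delta A_h u = -qu + (\text{correction})$, and combined with $-\Delta v = 0$ the equation $(-\Delta+q)u = 0$ falls out (the bookkeeping with the conjugation identity $P_\varphi = e^{\varphi/h}(-h^2\Delta)e^{-\varphi/h}$ must be done carefully but is routine). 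Then $k := \gamma(u) \in H^{-\frac{1}{2}}(\p M)$, and by Proposition \ref{prop_integral_eq}, $(1 + h^2 S_\varphi(\Lambda_{g,q}-\Lambda_{g,0}))k = f$. Moreover $\|k\|_{H^{-\frac{1}{2}}} = \|\gamma u\|_{H^{-\frac{1}{2}}} \le C\|u\|_{L^2(M)} \le C(1-Ch)^{-1}\|v\|_{L^2(M)} \le C'\|f\|_{H^{-\frac{1}{2}}}$, using continuity of $\gamma: b_q \to H^{-\frac{1}{2}}(\p M)$ (equivalently the closed graph theorem, since $\gamma: b_q \to H^{-\frac{1}{2}}(\p M)$ is a continuous bijection with bounded inverse $\mathcal{P}_q$). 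Combined with injectivity and the open mapping theorem, this shows $1 + h^2 S_\varphi(\Lambda_{g,q}-\Lambda_{g,0})$ is a linear homeomorphism.

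\emph{Main obstacle.} The delicate point is not the Neumann-series argument itself but justifying the interior-to-boundary correspondence at the regularity level of $H^{-\frac{1}{2}}(\p M)$ / $L^2(M)$-solutions — i.e. making rigorous that $u = (1+A_h)^{-1}\mathcal{P}_0 f$ genuinely lies in $b_q$ and has trace solving \eqref{eq_2_13}, which requires the conjugation identity for $P_\varphi$ on the maximal domain $H_\Delta(M)$ and the mapping property Theorem \ref{thm_green_operators}(iii), rather than on smooth functions. Once the equivalence of Proposition \ref{prop_integral_eq} is invoked this is essentially packaged, so the argument is short; the only quantitative input is $\|G_\varphi\|_{\mathcal{L}(L^2,L^2)} = \mathcal{O}(h^{-1})$, which forces the restriction $0 < h \ll 1$.
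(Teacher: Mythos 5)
Your argument is correct and follows essentially the same route as the paper: Neumann-series invertibility of $1+h^2 e^{-\varphi/h}G_\varphi e^{\varphi/h}q$ on $L^2(M)$ via Theorem \ref{thm_green_operators}(ii), the observation that this operator maps $b_q$ isomorphically onto $b_0$, and transfer to the boundary through Proposition \ref{prop_integral_eq} and the Poisson operators. Your separate injectivity paragraph is subsumed by the Neumann-series invertibility (and the smallness bound used there really comes from Theorem \ref{thm_green_operators}(ii) rather than directly from the Carleman estimate \eqref{eq_2_3}), but the substance is identical to the paper's proof.
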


\begin{proof}
First by Theorem \ref{thm_green_operators} (ii), for all $0<h\ll 1$, the map
\[
1+ h^2e^{-\frac{\varphi}{h}}\circ G_\varphi\circ e^{\frac{\varphi}{h}}  q=e^{-\frac{\varphi}{h}}(1+h^2G_\varphi q )e^{\frac{\varphi}{h}}: L^2(M)\to L^2(M)
\]
is a linear homemorphism. Hence, for all $0<h\ll 1$ and any $v\in L^2(M)$, there exists a unique $u\in L^2(M)$ such that 
\[
(1+ h^2e^{-\frac{\varphi}{h}}\circ G_\varphi\circ e^{\frac{\varphi}{h}}  q)u=v. 
\]
If $v\in b_0$ then 
\[
0=-h^2\Delta u+ (-h^2\Delta)(e^{-\frac{\varphi}{h}}\circ G_\varphi\circ e^{\frac{\varphi}{h}}  h^2qu)= (-h^2\Delta+h^2q)u,
\]
and therefore, $u\in b_q$. Thus, for all $0<h\ll 1$, the map
\[
1+ h^2e^{-\frac{\varphi}{h}}\circ G_\varphi\circ e^{\frac{\varphi}{h}}  q: b_q\to b_0
\]
is an isomorphism. As $\mathcal{P}_q: H^{-\frac{1}{2}}(\p M)\to b_q$  is an 
 isomorphism, by Proposition \ref{prop_integral_eq}, we get  the claim of Proposition \ref{prop_isomorphism}.
\end{proof}

\section{Proof of Theorem \ref{thm_main_2}}

\label{sec_proof_of_thm_main}

Assume first that $(M,g)$ is transversally anisotropic, i.e. $c=1$, so that $g=e\oplus g_0$. Assume also  that $(M,g)$, and therefore $(M_0,g_0)$, are known, as is the Dirichlet--to--Neumann map $\Lambda_{g,q}$. We would like to provide a reconstruction procedure of $q$ from this data. 

Our starting point is the integral identity \eqref{eq_2_11_new_ex} valid for all  $u_1\in b_q$ and $u_2\in b_0$. 
We shall next construct $u_1$ and $u_2$ as special complex geometric optics solutions. To that end, we shall need the following result from \cite{DKurylevLS_2016} concerning existence of Gaussian beam quasimodes, concentrating along non-tangential geodesics in $M_0$. 

\begin{thm}[\cite{DKurylevLS_2016}] 
\label{thm_Mikko}
Let $(M_0,g_0)$ be a compact oriented manifold with smooth boundary, let $\gamma:[0,L]\to M_0$ be a non-tangential geodesic, and let $\lambda\in \R$. For any $K>0$, there is a family of functions $v_s\in C^\infty(M_0)$, where $s=\frac{1}{h}+i\lambda$, and $0<h\le 1$, such that 
\begin{equation}
\label{eq_3_4}
\|(-\Delta_{g_0}-s^2)v_s\|_{L^2(M_0)}=\mathcal{O}(h^K), \quad \|v_s\|_{L^2(M_0)}=\mathcal{O}(1), 
\end{equation}
as $h\to 0$, and for any $\psi\in C(M_0)$, one has 
\begin{equation}
\label{eq_3_5}
\lim_{h\to 0} \int_{M_0}|v_s|^2\psi dV_{g_0}=\int_0^L e^{-2\lambda t}\psi(\gamma(t))dt. 
\end{equation}
\end{thm}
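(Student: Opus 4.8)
The plan is to construct $v_s$ as a WKB/Gaussian beam ansatz concentrated in a tubular neighbourhood of the geodesic $\gamma$. First I would work in Fermi coordinates $(t, y) = (t, y_1, \dots, y_{n-2})$ adapted to $\gamma$, valid in a neighbourhood of $\gamma([0,L])$ — here the assumption that $\gamma$ is \emph{non-tangential} is essential, since it guarantees that such a coordinate tube exists up to the endpoints without the geodesic grazing $\p M_0$, so that $\gamma$ can be slightly extended past $[0,L]$ inside a slightly enlarged manifold. In these coordinates $g_0$ satisfies $g_0^{jk}|_{\gamma} = \delta^{jk}$, and one writes the ansatz
\begin{equation*}
v_s = h^{-(n-2)/4}\, e^{is\Theta(t,y)}\, a(t,y;h),\qquad
\Theta = t + \tfrac{1}{2}\langle H(t) y, y\rangle + \mathcal{O}(|y|^3),
\end{equation*}
with $a = a_0 + h a_1 + \cdots$ supported in $|y| < \delta$ via a cutoff, and $s = \tfrac1h + i\lambda$. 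The prefactor $h^{-(n-2)/4}$ is chosen so that $\|v_s\|_{L^2(M_0)} = \mathcal{O}(1)$, since the Gaussian profile $e^{is\Theta}$ decays like $e^{-|y|^2/(2h)}$ in the transversal directions.

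The key steps are the standard Gaussian beam eikonal/transport hierarchy. Plugging the ansatz into $(-\Delta_{g_0}-s^2)v_s$ and collecting powers of $s$: the leading term forces the eikonal equation $\langle d\Theta, d\Theta\rangle_{g_0} = 1$ to vanish to second order in $y$ along $\gamma$; its restriction to $y=0$ is automatic since $t$ is arclength, the first-order part vanishes because $\gamma$ is a geodesic, and the second-order part gives a matrix Riccati equation $\dot H + H^2 + \mathrm{(curvature)} = 0$ along $\gamma$. Choosing $H(0)$ with $\Im H(0) > 0$ positive definite, one shows (via the associated linear Hamiltonian system and a Wronskian-type argument) that $\Im H(t)$ stays positive definite for all $t\in[0,L]$; this is what makes $e^{is\Theta}$ genuinely Gaussian and concentrating. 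The next order gives the first transport equation for $a_0$, a linear ODE along $\gamma$ whose solution has $|a_0(t,0)|^2 = |a_0(0,0)|^2 \cdot (\det \Im H(0)/\det \Im H(t))^{1/2} \cdot e^{-2\lambda t}$ after accounting for the $e^{is\Theta}$ imaginary part — the factor $e^{-2\lambda t}$ is exactly what produces the weight in \eqref{eq_3_5}. Higher transport equations determine $a_1, a_2, \dots$; truncating after enough terms and cutting off in $|y|<\delta$ leaves an error $(-\Delta_{g_0}-s^2)v_s$ that is $\mathcal{O}(h^N)$ from the truncation plus an exponentially small $\mathcal{O}(e^{-c/h})$ contribution from the cutoff (since the Gaussian is tiny at $|y|=\delta$); choosing $N$ large relative to $K$ gives the first bound in \eqref{eq_3_4}.

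Finally, for \eqref{eq_3_5}, one computes $\int_{M_0} |v_s|^2 \psi\, dV_{g_0}$ in Fermi coordinates: $|v_s|^2 = h^{-(n-2)/2} e^{-2\lambda t}\, e^{-\langle \Im H(t) y, y\rangle/h + \mathcal{O}(|y|^3/h)}\, |a_0(t,0)|^2 (1 + \mathcal{O}(h + |y|))$, and the transversal Gaussian integral $\int e^{-\langle \Im H(t) y,y\rangle/h} dy = (\pi h)^{(n-2)/2}(\det \Im H(t))^{-1/2}$ cancels the prefactor and the $\det\Im H(t)$ factor from $|a_0(t,0)|^2$ (with the normalization $|a_0(0,0)|^2 = (\det \Im H(0))^{1/2}/\pi^{(n-2)/2}$), leaving $\int_0^L e^{-2\lambda t}\psi(\gamma(t))\,dt$ in the limit $h\to 0$ by dominated convergence. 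The main obstacle is controlling the construction uniformly up to the endpoints $t=0,L$: one must ensure the Fermi coordinate tube and the Riccati solution $H(t)$ (in particular positivity of $\Im H(t)$) persist on the closed interval, which is where non-tangentiality of $\dot\gamma(0), \dot\gamma(L)$ is used — it lets one extend $\gamma$ to a slightly larger geodesic segment inside an open manifold containing $M_0$ and run the whole construction there, then restrict.
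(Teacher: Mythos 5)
First, a framing point: the paper does not prove Theorem~\ref{thm_Mikko} at all --- it imports it verbatim from \cite{DKurylevLS_2016}, adding only the Remark that the quasimodes are explicit. Your sketch reproduces the standard Gaussian beam construction used in that reference (Fermi coordinates along an extension of $\gamma$ past the endpoints, phase $\Theta=t+\tfrac12\langle H(t)y,y\rangle+\mathcal{O}(|y|^3)$ with $H$ solving a matrix Riccati equation and $\Im H(t)>0$ propagated via the associated linear Hamiltonian system, transport equations for the amplitude, the $h^{-(n-2)/4}$ normalization, exponentially small cutoff errors, and the transversal Gaussian integral producing \eqref{eq_3_5}), so in that respect it is on target.

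There is, however, one genuine gap. You work throughout in a single Fermi coordinate tube around $\gamma([0,L])$, which presupposes that $\gamma$ is an embedded segment. The theorem is stated for an arbitrary non-tangential geodesic on a general compact transversal manifold; since $(M_0,g_0)$ is not assumed simple, such a geodesic may self-intersect, and this is precisely the regime that distinguishes \cite{DKurylevLS_2016} from the admissible case. The construction there accordingly covers $[0,L]$ (slightly extended) by finitely many subintervals on which $\gamma$ is injective --- finitely many suffice because a non-tangential geodesic segment self-intersects only finitely many times --- builds a beam on each tube, matches consecutive beams on overlaps by uniqueness of the Riccati and transport data, and takes $v_s$ to be the resulting finite sum. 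This matters for your proof of \eqref{eq_3_5}: near a self-intersection point $\gamma(t_j)=\gamma(t_k)$ the quantity $|v_s|^2$ contains cross terms $v^{(j)}\overline{v^{(k)}}$, and one must show these contribute $o(1)$ as $h\to 0$; this holds because the two branches have distinct unit tangents there, so the phase of $v^{(j)}\overline{v^{(k)}}$ is non-stationary on the intersection of the two concentration sets, but it is a step your argument omits. A much smaller bookkeeping point: the weight $e^{-2\lambda t}$ in \eqref{eq_3_5} comes directly from $|e^{is\Theta}|^2=e^{-2\lambda t}e^{-\langle\Im H(t)y,y\rangle/h+\cdots}$, since $\Theta|_{y=0}=t$ and $s=\tfrac1h+i\lambda$, rather than from the transport equation for $a_0$; your phrasing conflates the two sources, though the net formula you state is consistent.
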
  

\textbf{Remark.} It follows from the proof of Theorem \ref{thm_Mikko} in \cite{DKurylevLS_2016} that $v_s$ are explicit functions of Gaussian type which can be constructed from the knowledge of the manifold $(M_0,g_0)$.  

Let us write $x=(x_1, x')$ for local coordinates in $\R\times M_0$. The function $\varphi(x)=x_1$ is a limiting Carleman weight for $-h^2\Delta$, see \cite{DKSaloU_2009}. It follows from \eqref{eq_3_4}, since $v_s=v_s(x')$, that 
\begin{equation}
\label{eq_3_6}
\|e^{-sx_1}(-\Delta_{g})e^{sx_1} v_s\|_{L^2(M_0)}=\mathcal{O}(h^K),\quad \|e^{sx_1}(-\Delta_{g})e^{-sx_1} v_s\|_{L^2(M_0)}=\mathcal{O}(h^K),
\end{equation}
as $h\to 0$.  

We are interested in harmonic functions $u_2\in L^2(M)$ of the form, 
\begin{equation}
\label{eq_3_7}
u_2=e^{sx_1}(v_s+\tilde r_2),
\end{equation}
where $v_s$ is the Gaussian beam quasimode from Theorem \ref{thm_Mikko}, and $\tilde r_2$ is a remainder term to be constructed. Now $u_2$ is harmonic if and only if $\tilde r_2$ satisfies
\begin{equation}
\label{eq_3_8}
P_{-\varphi} e^{i\lambda x_1}\tilde r_2= - e^{i\lambda x_1} e^{-sx_1}(-h^2\Delta)e^{sx_1} v_s.
\end{equation}
Looking for a solution of \eqref{eq_3_8} in the form $\tilde r_2= e^{-i\lambda x_1}G_{-\varphi}r_2$, we see that 
\[
r_2=-e^{i\lambda x_1}e^{-sx_1}(-h^2\Delta)e^{sx_1} v_s.
\]
It follows from \eqref{eq_3_6} that $\|r_2\|_{L^2(M)}=\mathcal{O}(h^{K+2})$, and therefore, by  Theorem \ref{thm_green_operators} (ii), 
\begin{equation}
\label{eq_3_8_2}
\|\tilde r_2\|_{L^2(M)}=\mathcal{O}(h^{K+1}),
\end{equation} 
as $h\to 0$, for $K>0$. 

We next construct complex geometric optics solutions $u_1\in b_q$ in the form, 
\begin{equation}
\label{eq_3_9}
u_1=u_0+e^{-sx_1}\tilde r_1,
\end{equation}
where $u_0\in L^2(M)$ is a harmonic function of the form,
\begin{equation}
\label{eq_3_10}
u_0=e^{-sx_1}(v_s+\tilde r_0).
\end{equation}
Here $\tilde r_0$ satisfies the equation
\begin{equation}
\label{eq_3_11}
P_{\varphi} e^{-i\lambda x_1}\tilde r_0= - e^{-i\lambda x_1} e^{sx_1}(-h^2\Delta)e^{-sx_1} v_s,
\end{equation}
and we can take
\[
\tilde r_0=e^{i\lambda x_1} G_\varphi r_0, \quad r_0=-e^{-i\lambda x_1} e^{sx_1}(-h^2\Delta)e^{-sx_1} v_s.
\]
It follows  from \eqref{eq_3_6} and  Theorem \ref{thm_green_operators} (ii) that 
\begin{equation}
\label{eq_3_11_2}
\|\tilde r_0\|_{L^2(M)}=\mathcal{O}(h^{K+1}), 
\end{equation}
as $h\to 0$, for $K>0$. 
To find the remainder $\tilde r_1$ in \eqref{eq_3_9}, we should solve the equation
\[
(-\Delta+q)(u_0+e^{-sx_1}\tilde r_1)=0,
\]
which is equivalent to 
\begin{equation}
\label{eq_3_12}
(P_\varphi+h^2q)e^{-i\lambda x_1}\tilde r_1=-h^2e^{\frac{\varphi}{h}}q u_0. 
\end{equation}
Looking for a solution $\tilde r_1$ of \eqref{eq_3_12} in the form 
\begin{equation}
\label{eq_3_12_2}
\tilde r_1=e^{i\lambda x_1}G_\varphi r_1,\quad r_1\in L^2(M), 
\end{equation}
 we see that $r_1$ should satisfy 
\begin{equation}
\label{eq_3_13}
(I+h^2qG_\varphi)r_1=-h^2e^{\frac{\varphi}{h}}q u_0. 
\end{equation}
It follows from Theorem \ref{thm_green_operators} (ii) that the equation \eqref{eq_3_13} has a unique solution $r_1\in L^2(M)$ such that 
\[
\|r_1\|_{L^2(M)}=\mathcal{O}(h^2)\|e^{\frac{\varphi}{h}} u_0\|_{L^2(M)}=\mathcal{O}(h^2)\|e^{-i\lambda x_1} (v_s+\tilde r_0)\|_{L^2(M)}=\mathcal{O}(h^2),
\]
as $h\to 0$. Here we have used 
\eqref{eq_3_10}, \eqref{eq_3_4}, and \eqref{eq_3_11_2}. Hence, 
\begin{equation}
\label{eq_3_13_2}
\|\tilde r_1\|_{L^2(M)}=\mathcal{O}(h),
\end{equation}
 as $h\to 0$.  We have therefore constructed $u_1\in b_q$  of the form 
 \begin{equation}
\label{eq_3_14}
u_1=u_0+e^{-\frac{\varphi}{h}}G_\varphi r_1,  
\end{equation}
where $r_1\in L^2(M)$ is the unique solution of \eqref{eq_3_13}.

 We shall next show that the boundary traces of $u_1$ can be reconstructed from the knowledge of $\Lambda_{g,q}$. To that end, we claim that $u_1$ satisfies 
\begin{equation}
\label{eq_3_15}
(I+h^2 e^{-\frac{\varphi}{h}}G_\varphi q e^{\frac{\varphi}{h}})u_1=u_0.
\end{equation}
Indeed, applying $G_\varphi$ to \eqref{eq_3_13} and multiplying by $e^{-\frac{\varphi}{h}}$, we get 
\begin{equation}
\label{eq_3_16}
e^{-\frac{\varphi}{h}} G_\varphi r_1+ e^{-\frac{\varphi}{h}} h^2 G_\varphi (q G_\varphi r_1)=-h^2e^{-\frac{\varphi}{h}} G_\varphi e^{\frac{\varphi}{h}} q u_0.
\end{equation}
Adding $u_0$ to the both sides of \eqref{eq_3_16}, and using \eqref{eq_3_14}, we obtain \eqref{eq_3_15} as claimed. 

Note  that the harmonic function  $u_0$ given by  \eqref{eq_3_10} is an explicit function which is constructed from the knowledge of $(M,g)$. In particular, $u_0|_{\p M}$ is also known. By Proposition \ref{prop_integral_eq} , $f=u_1|_{\p M}\in H^{-\frac{1}{2}}(\p M)$ satisfies the boundary integral equation,
\begin{equation}
\label{eq_3_17}
(1+ h^2S_\varphi(\Lambda_{g,q}-\Lambda_{g,0}))f=u_0|_{\p M}.
\end{equation}
Here the operator in the left hand side and the function in the right hand side are known, and 
by Proposition \ref{prop_isomorphism}, for all $0<h\ll 1$, we can construct $f$ as the unique solution to \eqref{eq_3_17}. 

It follows from the discussion above together with the  integral identity \eqref{eq_2_11_new_ex}, that from the knowledge of our data, we can reconstruct the  integrals 
\begin{equation}
\label{eq_3_18}
\int_M q u_1\overline{u_2}dV_g,
\end{equation}
with  $u_1$, $u_2$ given by \eqref{eq_3_9}, \eqref{eq_3_7}, respectively. Thus, using \eqref{eq_3_7}, \eqref{eq_3_9}, \eqref{eq_3_10}, we conclude from \eqref{eq_3_18} that we can reconstruct 
\begin{equation}
\label{eq_3_19}
\int_M q e^{-2i \lambda x_1}(v_s+\tilde r_0+\tilde r_1)(\overline{v_s}+\overline{\tilde r_2})dx_1dV_{g_0}, 
\end{equation}
for all $0<h\ll 1$. 
Using also \eqref{eq_3_4},  \eqref{eq_3_8_2}, \eqref{eq_3_11_2},  \eqref{eq_3_13_2}, we observe that \eqref{eq_3_19} is of the form 
\begin{equation}
\label{eq_3_20}
\int_M q e^{-2i \lambda x_1} |v_s|^2dx_1dV_{g_0}+\mathcal{O}(h),
\end{equation}
as $h\to 0$. It will now be convenient to extend the domain of integration in \eqref{eq_3_20} to all of $T=\R\times M_0$. To that end, we extend $q$ to a function in $C_0(T^{\text{int}})$ in such a way that  $q|_{T\setminus M}$ is known. This can be done by determining $q$ on $\p M$ from  the knowledge of $\Lambda_{g,q}$ and $\Lambda_{g,0}$ in a constructive  way, see Theorem \ref{thm_app_boundary_reconstruction}. Hence, it follows from \eqref{eq_3_20} that the boundary data allows us to reconstruct 
\begin{equation}
\label{eq_3_21}
\int_{\R} e^{-2i \lambda x_1} \int_{M_0} q(x_1, x')|v_s(x')|^2 dV_{g_0}dx_1 +\mathcal{O}(h).
\end{equation}
Taking the limit as $h\to 0$ in \eqref{eq_3_21} and using \eqref{eq_3_5}, we are able to reconstruct 
\begin{equation}
\label{eq_3_22}
\int_{\R} e^{-2i \lambda x_1} \int_{0}^L e^{-2\lambda t} q(x_1, \gamma(t))dt=\int_{0}^L \hat q(2\lambda, \gamma(t))e^{-2\lambda t}dt,
\end{equation}
for any $\lambda\in \R$ and any non-tangential geodesic $\gamma$ in $M_0$. Here 
\[
\hat q(\lambda, x')=\int_\R e^{-i \lambda x_1} q(x_1,x')dx_1.
\]
The integral in the right hand side of \eqref{eq_3_22} is the attenuated geodesic ray transform of $\hat q(2\lambda, \cdot)$ with constant attenuation $-2\lambda$. 

Setting $\lambda=0$ in \eqref{eq_3_22}, we recover the geodesic ray transform of $\hat q(0, \cdot)$. Using the constructive invertibility assumption for the  geodesic ray transform on $M_0$, we determine $\hat q(0, \cdot)$ in $M_0$. Differentiating  \eqref{eq_3_22} with respect to $\lambda$ and letting $\lambda=0$, as we know $\hat q(0, \cdot)$, we constructively determine the geodesic ray transform of $\p_\lambda \hat q(0, \cdot)$. Using again the constructive invertibility assumption, we constructively recover $\p_\lambda \hat q(0, \cdot)$ in $M_0$.
 Continuing in the same fashion, we constructively determine the derivatives  $\p_\lambda^k \hat q(0, \cdot)$ in $M_0$ for all $k\ge 0$. We have therefore determined the Taylor series of the entire function $\lambda \mapsto \hat q(\lambda, x')$ at $\lambda =0$. Inverting the one-dimensional Fourier transform, we complete the reconstruction of $q$ in $\R\times M_0$ in the case $c=1$. 

The argument explained in  \cite[Section 4]{Kenig_Salo_Uhlmann_2011} allows us to remove the simplifying assumption $c=1$. This completes the proof of Theorem \ref{thm_main_2}.

\begin{appendix}

\section{Boundary reconstruction for a continuous potential}

\label{app_boundary_reconstruction}

The purpose of this appendix is to provide a reconstruction formula for the boundary values of a continuous potential $q$ from the knowledge of the Dirichlet--to--Neumann map for the Schr\"odinger operator $-\Delta+q$ on a smooth compact Riemannian manifold of dimension $n\ge 2$ with smooth boundary. The boundary determination of a continuous potential   is known, see \cite[Appendix]{Guillarmou_Tzou_2011} for the case $n=2$, and \cite[Appendix C]{Krup_Uhlmann_2020} for an extension of this result  to the case $n\ge 3$, see also \cite{Aless_deHoop_Gaburro_Sincich_2018}.  We refer to \cite{Alessandrini_1990}, \cite{Brown_2001}, \cite{Brown_Salo_2006}, \cite{Kohn_Vogelius_1984},  \cite{Sylvester_Uhlmann_1988}, \cite{Caro_Andoni_2017}, \cite{Caro_Merono}, for the boundary determinations/reconstructions of conductivity as well as first order perturbations of the Laplacian.  The approach of \cite[Appendix]{Guillarmou_Tzou_2011} uses a family of functions, whose boundary values have a highly oscillatory behavior while becoming increasingly concentrated near a given point on the boundary of $M$, proposed in \cite{Brown_2001}, \cite{Brown_Salo_2006}, as well as Carleman estimates for the conjugated Laplacian with a gain of two derivatives in order to convert such functions into solutions of Schr\"odinger equations. This approach does not appear to be constructive, as the boundary traces of these solutions are not determined in this approach.  Not being aware of any reference for the constructive determination of the boundary values of a continuous potential from boundary measurements, and also, since we need this result for the proof of Theorem \ref{thm_main_2}, we present a reconstruction formula here. Let us remark that in the case of smooth potentials, the entire Taylor series at the boundary can be determined 
from the knowledge of the Dirichlet--to--Neumann map by means of a constructive procedure, see  \cite[Section 8]{DKSaloU_2009},  \cite{Stefanov_Yang_2018}, \cite{Lassas_Liimatainen_Salo}. 

Our boundary reconstruction result is as follows. 
\begin{thm}
\label{thm_app_boundary_reconstruction}
Let $(M,g)$ be a compact smooth Riemannian manifold of dimension $n\ge 2$ with smooth boundary. Let $q\in C(M)$ and assume that $0$ is not a Dirichlet eigenvalue of $-\Delta+q$ in $M$. For each point $x_0\in \p M$, there exists an explicit  family of functions $v_\lambda\in C^\infty(M)$, $0<\lambda\ll 1$,  such that 
\[
 q(x_0)= 2\lim_{\lambda\to 0} \langle (\Lambda_{g, q}- \Lambda_{g,0}) (v_\lambda|_{\p M}), \overline{v_\lambda}|_{\p M} \rangle_{H^{-\frac{1}{2}}(\p M), H^{\frac{1}{2}}(\p M)}.
\]
\end{thm}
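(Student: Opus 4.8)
The plan is to build the family $v_\lambda$ by concentrating a highly oscillatory boundary profile near $x_0$ and converting it into an approximate solution of the Schrödinger equation via the Carleman estimate machinery of Section~\ref{sec_Nachman-Street}, then reading off $q(x_0)$ from the integral identity \eqref{eq_2_11_new_ex}. Concretely, I would choose boundary normal coordinates $(x',x_n)$ centered at $x_0$, with $x_n$ the distance to $\p M$ and $x'$ coordinates on $\p M$, and take $\varphi(x)=x_n$ (or a suitable modification making it a limiting Carleman weight near $x_0$); then set, following \cite{Brown_2001}, \cite{Brown_Salo_2006}, \cite{Guillarmou_Tzou_2011}, a localized oscillatory profile of the form $v_\lambda(x)=\eta(x/\sqrt\lambda)\,e^{(i\alpha\cdot x'-x_n)/\lambda}$ times a normalizing power of $\lambda$, where $\alpha$ is a fixed unit covector tangent to $\p M$ and $\eta$ a fixed cutoff. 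The normalization is chosen so that $\|v_\lambda\|_{L^2(M)}^2\asymp 1$ while the boundary trace concentrates at $x_0$ at scale $\sqrt\lambda$ in the tangential variables and oscillates at frequency $1/\lambda$; this is exactly the Brown-type profile, and all of it is explicit given $(M,g)$.

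Next I would use the Carleman estimate (Proposition in Section~\ref{sec_Nachman-Street}, or the gain-of-two-derivatives version of \cite{DKSaloU_2009}) together with Green's operator $G_\varphi$ from Theorem~\ref{thm_green_operators} to correct $v_\lambda$ into an exact solution $u_\lambda\in b_q$: writing $u_\lambda = v_\lambda + r_\lambda$ with $r_\lambda$ solving $P_\varphi$-type equation with right-hand side controlled by $(-\Delta+q)v_\lambda$, and noting that the oscillatory construction forces $\|(-\Delta+q)v_\lambda\|_{L^2}=o(1)$ relative to the scale so that $r_\lambda\to 0$ in $L^2(M)$ and moreover $r_\lambda|_{\p M}$ is negligible in the relevant pairing. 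The key point is that $u_\lambda|_{\p M}$ differs from $v_\lambda|_{\p M}$ by something that contributes only $o(1)$ to the quadratic form $\langle(\Lambda_{g,q}-\Lambda_{g,0})(\cdot),\overline{(\cdot)}\rangle$, because $\Lambda_{g,q}-\Lambda_{g,0}$ is smoothing (it maps $H^{-1/2}\to H^{1/2}$, as established in Section~\ref{sec_Nachman-Street}) so the error pairs against a bounded family and vanishes.

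Then I would plug into \eqref{eq_2_11_new_ex}: choosing the second solution $u_2$ to be the harmonic correction of $\overline{v_\lambda}$ (an analogous explicit construction with weight $-\varphi$), the right-hand side becomes $\int_M q\,u_1\,u_2\,dV_g = \int_M q\,|v_\lambda|^2\,dV_g + o(1)$. Since $|v_\lambda|^2$ is, after normalization, an approximate identity concentrating at $x_0$ (the oscillation $e^{i\alpha\cdot x'/\lambda}$ cancels in modulus, leaving a bump of total mass converging to the value of $q$ weighted by the local volume element, which the normalization is chosen to make equal to $1/2$), we get $\int_M q\,|v_\lambda|^2\,dV_g \to \tfrac12 q(x_0)$ by continuity of $q$ at $x_0$. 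Rearranging gives the stated formula. The main obstacle I anticipate is controlling the boundary trace error term $r_\lambda|_{\p M}$ and verifying that the quadratic-form contribution of $v_\lambda - u_\lambda$ is genuinely $o(1)$: this requires either a trace estimate for $H_\Delta(M)$-solutions with explicit $\lambda$-dependence, or exploiting the smoothing of $\Lambda_{g,q}-\Lambda_{g,0}$ together with a uniform $H^{-1/2}(\p M)$ bound on the traces — and one must also check that the oscillatory weight $\varphi=x_n$ can be arranged to be a limiting Carleman weight in a neighborhood of $x_0$ so that Theorem~\ref{thm_green_operators} applies, or else replace $G_\varphi$ by the direct Carleman-estimate solvability argument of \cite{Guillarmou_Tzou_2011} made constructive by noting that $r_\lambda$ is then the unique minimal-norm solution and hence explicitly $G_\varphi$ applied to an explicit right-hand side.
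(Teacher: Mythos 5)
Your overall skeleton (Brown-type oscillating profiles concentrated at $x_0$, the integral identity, and the concentration computation giving $\tfrac12 q(x_0)$) matches the paper, but the technical mechanism you propose for producing the exact solutions has two genuine gaps, and the paper's proof deliberately takes a different route to avoid both. First, the paper does not use the Carleman/Green's operator machinery of Section~\ref{sec_Nachman-Street} here at all: it obtains the correction terms $r_1,r_2$ as solutions of the Dirichlet problems $-\Delta r_1=\Delta v_\lambda$, $(-\Delta+q)r_2=-(-\Delta+q)v_\lambda$ with \emph{zero} boundary data. This makes $u|_{\p M}=v|_{\p M}=v_\lambda|_{\p M}$ exactly, so the boundary-trace error you identify as "the main obstacle" simply never arises, and the pairing in the theorem is literally against the explicit trace $v_\lambda|_{\p M}$. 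Your plan, by contrast, leaves you with $u_\lambda|_{\p M}=v_\lambda|_{\p M}+r_\lambda|_{\p M}$ and an unquantified trace error; controlling $\|r_\lambda|_{\p M}\|_{H^{-1/2}}$ through the $H_\Delta$ trace map would require bounds on $\|\Delta r_\lambda\|_{L^2}$ that you do not have, and appealing to the smoothing of $\Lambda_{g,q}-\Lambda_{g,0}$ does not close this without a decaying bound on the trace itself.

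Second, your assertion that the construction "forces $\|(-\Delta+q)v_\lambda\|_{L^2}=o(1)$ relative to the scale so that $r_\lambda\to0$ in $L^2$" is not justified and is in fact the crux of the matter. The $L^2$ Carleman estimate \eqref{eq_2_1} loses a factor $h^{-1}=\lambda^{-1}$ and, applied to $h^2\Delta v_\lambda$, yields only $\lambda\|\Delta v_\lambda\|_{L^2}$, which does not decay; even the gain-of-two-derivatives versions used in \cite{Brown_Salo_2006} and \cite{Guillarmou_Tzou_2011} give only $\|r\|_{L^2}=\mathcal{O}(1)$, which (as the paper points out explicitly) is \emph{not enough} to isolate $q(x_0)$. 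The paper's actual work is precisely here: it takes concentration scale $\lambda^{\alpha}$ with $\alpha=1/3$ (note that your choice $\sqrt{\lambda}$, i.e.\ $\alpha=1/2$, sits exactly at the endpoint where the estimate fails to decay), applies fractional elliptic regularity in $H^{1/2+\varepsilon}$ (Theorems~\ref{thm_reg_1}, \ref{thm_reg_2}) together with the interpolation bound $\|\Delta v_\lambda\|_{H^{-3/2+\varepsilon}}=\mathcal{O}(\lambda^{-\alpha+1/2-\varepsilon})$ to get $\|r_1\|_{L^2}=\mathcal{O}(\lambda^{1/12})$, and handles the term $\|qv_\lambda\|_{H^{-1}}$ for merely continuous $q$ by mollifying $q$ at scale $\lambda^{\alpha}$ and integrating by parts against the oscillation. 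None of these steps appears in your proposal, and without them the claim $r_\lambda\to 0$ in $L^2(M)$ is unsupported.
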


\begin{proof}
Our starting point is the  integral identity \eqref{eq_2_11} which we write as follows, 
\begin{equation}
\label{eq_app_3}
\int_{M}q u\overline{v}dV_g=\langle (\Lambda_{g, q}- \Lambda_{g,0}) f, \overline{f} \rangle_{H^{-\frac{1}{2}}(\p M), H^{\frac{1}{2}}(\p M)},
\end{equation}
where $u,v\in H^1(M^{\text{int}})$ are solutions to 
\begin{equation}
\label{eq_app_1}
\begin{cases}
(-\Delta+q)u=0 & \text{in}\quad M^\text{int},\\
u|_{\p M}=f,
\end{cases}
\end{equation}
 and 
\begin{equation}
\label{eq_app_4}
\begin{cases}
-\Delta v=0 & \text{in}\quad M^\text{int},\\
v|_{\p M}=f.
\end{cases}
\end{equation}

Next  we shall follow \cite{Brown_2001}, \cite{Brown_Salo_2006}, constructing an explicit  family of functions $v_\lambda$, whose boundary values have a highly oscillatory behavior as $\lambda\to 0$, while becoming increasingly concentrated near a given point on the boundary of $M$.  To that end,  we let $x_0\in \p M$ and let $(x_1,\dots, x_n)$ be the boundary normal coordinates centered at $x_0$ so that in these coordinates, $x_0 =0$, the boundary $\p M$ is given by $\{x_n=0\}$, and $M^{\text{int}}$ is given by $\{x_n > 0\}$. We have, see \cite{Lee_Uhlmann_1989},   
\begin{equation}
\label{eq_boun_2_metric}
g(x',x_n)=\sum_{\alpha,\beta=1}^{n-1}g_{\alpha\beta}(x)dx_\alpha dx_\beta+(dx_n)^2,
\end{equation}
and we may also assume that the coordinates $x' = (x_1, \dots, x_{n-1})$ are chosen so that 
\begin{equation}
\label{eq_boun_2}
g^{\alpha\beta}(x',0)=\delta^{\alpha\beta}+\mathcal{O}(|x'|^2), \quad 1\le \alpha,\beta\le n-1,
\end{equation}
see \cite[Chapter 2, Section 8, p. 56]{Petersen_book}. 

Notice that in the local coordinates, $T_{x_0}\p M=\R^{n-1}$, equipped with the Euclidean metric. 
The unit tangent vector $\tau$ is then given by $\tau=(\tau',0)$ where $\tau'\in \R^{n-1}$, $|\tau'|=1$.   Associated to the tangent vector $\tau'$ is the covector $\xi'_\alpha=\sum_{\beta=1}^{n-1} g_{\alpha \beta}(0) \tau'_\beta=\tau'_\alpha\in T^*_{x_0}\p M$. 

Let $\eta\in C^\infty_0(\R^n;\R)$ be such that $\supp(\eta)$ is in a small neighborhood of $0$, and 
\begin{equation}
\label{eq_boun_4}
\int_{\R^{n-1}}\eta(x',0)^2dx'=1.
\end{equation}
Let $\frac{1}{3}\le \alpha\le \frac{1}{2}$. Following \cite{Brown_Salo_2006}, \cite[Appendix C]{Krup_Uhlmann_2020},  in the boundary normal coordinates,  we set 
\begin{equation}
\label{eq_boun_5}
v_\lambda(x)= \lambda^{-\frac{\alpha(n-1)}{2}-\frac{1}{2}}\eta\bigg(\frac{x}{\lambda^{\alpha}}\bigg)e^{\frac{i}{\lambda}(\tau'\cdot x'+ ix_n)}, \quad 0<\lambda\ll 1,
\end{equation}
so that  $v_\lambda\in C^\infty(M)$, with $\supp(v_\lambda)$ in  $\mathcal{O}(\lambda^{\alpha})$ neighborhood of $x_0=0$. Here $\tau'$ is viewed as a covector.  A direct computation shows that 
\begin{equation}
\label{eq_boun_6}
\|v_\lambda\|_{L^2(M)}=\mathcal{O}(1),  
\end{equation}
as $\lambda\to 0$, see also \cite[Appendix C]{Krup_Uhlmann_2020}.

Next we would like to construct $v, u\in H^1(M^{\text{int}})$  of the form 
\begin{equation}
\label{eq_boun_6_00}
v=v_\lambda+r_1,\quad u=v_\lambda+r_2,
\end{equation}
solving  \eqref{eq_app_4} and \eqref{eq_app_1}, and so that  $r_1$ and $r_2$ have decaying $L^2$ norms as $\lambda\to 0$.  The idea of  \cite[Appendix]{Guillarmou_Tzou_2011}, see also \cite[Appendix C]{Krup_Uhlmann_2020}, was to use Carleman estimates with a gain of two derivatives to accomplish this.  However, here for the reconstruction purposes we need to know the boundary traces $v|_{\p M}$ and $u|_{\p M}$, see \eqref{eq_app_3}. To achieve this, following \cite{Brown_2001}, \cite{Brown_Salo_2006}, see also \cite[Appendix]{Krup_Uhlmann_2018}, we shall obtain $r_1\in H^1_0(M^{\text{int}})$  as the solution to the Dirichlet problem, 
\begin{equation}
\label{eq_boun_34}
\begin{cases}
-\Delta r_1=\Delta  v_\lambda & \text{in}\quad M^{\text{int}},\\
r_1|_{\p M}=0. 
\end{cases}
\end{equation}
Similarly, we shall find $r_2\in H^1_0(M^{\text{int}})$  as the solution to the Dirichlet problem, 
\begin{equation}
\label{eq_boun_34_with_q}
\begin{cases}
(-\Delta +q)r_2=-(-\Delta v_\lambda+qv_\lambda) & \text{in}\quad M^{\text{int}},\\
r_2|_{\p M}=0. 
\end{cases}
\end{equation}
Note that in \cite{Brown_Salo_2006}, see also \cite[Appendix]{Krup_Uhlmann_2018}, one shows that  $\|r_1\|_{L^2(M)}=\mathcal{O}(1)$, which is not enough for the determination of the potential $q$ on $\p M$. To get an improved bound for $\|r_1\|_{L^2(M)}$, we use the estimate of Theorem \ref{thm_reg_1} below with $s=\frac{1}{2}+\varepsilon$, $0<\varepsilon<1/2$ to be chosen fixed. We have
\begin{equation}
\label{eq_app_7}
\|r_1\|_{L^2(M)}\le \|r_1\|_{H^{1/2+\varepsilon}(M^{\text{int}})}\le C\|\Delta v_\lambda\|_{H^{-3/2+\varepsilon}(M^{\text{int}})}.
\end{equation}
To estimate $\|\Delta v_\lambda\|_{H^{-3/2+\varepsilon}(M^{\text{int}})}$ we use interpolation, 
\begin{equation}
\label{eq_app_8}
\|\Delta v_\lambda\|_{H^{-3/2+\varepsilon}(M^{\text{int}})}\le \|\Delta v_\lambda\|_{H^{-1}(M^{\text{int}})}^{1/2+\varepsilon}\|\Delta v_\lambda\|_{H^{-2}(M^{\text{int}})}^{1/2-\varepsilon},
\end{equation}
see \cite[Theorem 7.22, p. 189]{Grubb_book}. Using the following bounds, established in \cite[Appendix C]{Krup_Uhlmann_2020},  
\[
\|\Delta v_\lambda\|_{H^{-2}(M^{\text{int}})}=\mathcal{O}(\lambda^{-\alpha+1}), \quad \frac{1}{3}\le \alpha\le \frac{1}{2},
\]
\[
\|\Delta v_\lambda\|_{H^{-1}(M^{\text{int}})}=\mathcal{O}(\lambda^{-\alpha}), \quad \frac{1}{3}\le \alpha\le \frac{1}{2},
\]
we get from \eqref{eq_app_8} that 
\[
\|\Delta v_\lambda\|_{H^{-3/2+\varepsilon}(M^{\text{int}})}=\mathcal{O}(\lambda^{-\alpha+1/2-\varepsilon}).
\]
Choosing $\alpha=1/3$ and $\varepsilon=1/12$, see see that 
\begin{equation}
\label{eq_app_9_0}
\|\Delta v_\lambda\|_{H^{-3/2+\varepsilon}(M^{\text{int}})}=\mathcal{O}(\lambda^{1/12}).
\end{equation}

Therefore, it follows from \eqref{eq_app_7} and \eqref{eq_app_9_0} that 
\begin{equation}
\label{eq_app_9}
\|r_1\|_{L^2(M)}=\mathcal{O}(\lambda^{1/12}),
\end{equation}
as $\lambda\to 0$.

In view of \eqref{eq_boun_34_with_q}, using the estimate of Theorem \ref{thm_reg_2} below with $s=\frac{1}{2}+\varepsilon$, $\varepsilon=1/12$, we obtain that 
\begin{equation}
\label{eq_app_10}
\|r_2\|_{L^2(M)}\le \|r_2\|_{H^{1/2+\varepsilon}(M^{\text{int}})}\le C(\|\Delta v_\lambda\|_{H^{-3/2+\varepsilon}(M^{\text{int}})}+\|qv_\lambda\|_{H^{-1}(M^{\text{int}})}).
\end{equation}
To bound $\|qv_\lambda\|_{H^{-1}(M^{\text{int}})}$ we first note that as $q\in C(M)$, using a partition of unity argument together with a regularization in each coordinate patch, we get that there exists $q_\tau\in C^\infty_0(M^{\text{int}})$, $\tau>0$, such that 
\begin{equation}
\label{eq_app_11}
\|q-q_\tau\|_{L^\infty(M)}=o(1), \quad  \|q_\tau\|_{L^\infty(M)}=\mathcal{O}(1), \quad  \|\nabla q_\tau\|_{L^\infty(M)}=\mathcal{O}(\tau^{-1}),
\end{equation}
as $\tau\to 0$.   Letting $\psi\in C_0^\infty(M^{\text{int}})$ and using \eqref{eq_app_11}, we obtain that 
\begin{equation}
\label{eq_app_12}
\bigg|\int_M (q-q_\tau)v_\lambda \psi dV_g\bigg|\le o_{\tau\to 0}(1)\mathcal{O}_{\lambda\to 0}(1)\|\psi\|_{L^2(M)}.
\end{equation}

Setting 
\[
L=\frac{\nabla \overline{\phi}\cdot \nabla}{i |\nabla \phi |^2}=\frac{1}{2i}\nabla \overline{\phi}\cdot \nabla, \quad \phi=\tau'\cdot x'+ix_n,
\]
we have $Le^{\frac{i}{\lambda}(\tau'\cdot x'+ix_n)}=\lambda^{-1}e^{\frac{i}{\lambda}(\tau'\cdot x'+ix_n)}$. Using \eqref{eq_boun_5}, the fact that the transpose $L^t= -L$ and integrating by parts, we get 
\begin{equation}
\label{eq_app_13}
\begin{aligned}
\bigg|\int_M q_\tau v_\lambda \psi dV_g\bigg|=\lambda^{-\frac{\alpha(n-1)}{2}-\frac{1}{2}}\bigg|\int_M q_\tau  \psi  \eta\bigg(\frac{x}{\lambda^{\alpha}}\bigg) e^{\frac{i}{\lambda}(\tau'\cdot x'+ ix_n)}|g(x)|^{1/2} dx\bigg|\\
=\lambda\lambda^{-\frac{\alpha(n-1)}{2}-\frac{1}{2}}\bigg|\int_M L\bigg( q_\tau  \psi  \eta\bigg(\frac{x}{\lambda^{\alpha}}\bigg)|g(x)|^{1/2}\bigg) e^{\frac{i}{\lambda}(\tau'\cdot x'+ ix_n)} dx\bigg|.
\end{aligned}
\end{equation}
Note that the worst growth in $\lambda$ in \eqref{eq_app_13}  occurs when $L$ falls on $q_\tau$ or $\eta$. Choosing $\tau=\lambda^{\alpha}$, and using the Cauchy--Schwarz inequality, those two terms are bounded by 
$\mathcal{O}(\lambda^{1-\alpha})\|\psi\|_{L^2(M)}$. Hence, it follows from \eqref{eq_app_12} and \eqref{eq_app_13} that 
\begin{equation}
\label{eq_app_14}
\|qv_\lambda\|_{H^{-1}(M^{\text{int}})}=o(1),
\end{equation}
as $\lambda\to 0$.  Thus, we see from \eqref{eq_app_10}, \eqref{eq_app_9_0}, and \eqref{eq_app_14}, that 
\begin{equation}
\label{eq_app_15}
\|r_2\|_{L^2(M)}=o(1),
\end{equation}
as $\lambda\to 0$.

Substituting $u$ and $v$ given by \eqref{eq_boun_6_00} into the integral identity \eqref{eq_app_3}, and taking the limit $\lambda\to 0$, we get 
\begin{equation}
\label{eq_app_17}
\lim_{\lambda\to 0}\langle (\Lambda_{g, q}- \Lambda_{g,0}) (v_\lambda|_{\p M}), \overline{v_\lambda}|_{\p M} \rangle_{H^{-\frac{1}{2}}(\p M), H^{\frac{1}{2}}(\p M)}=\lim_{\lambda\to 0} (I_1+I_2),
\end{equation}
where 
\[
I_1=\int_{M}q |v_\lambda|^2dV_g, \quad I_2=\int_{M}q (v_\lambda\overline{r_1}+ \overline{v_\lambda}r_2+\overline{r_1}r_2)dV_g.
\]
In view of \eqref{eq_app_9},  \eqref{eq_app_15}, and \eqref{eq_boun_6}, we have 
\begin{equation}
\label{eq_app_18}
|I_2|=o(1),
\end{equation}
as $\lambda \to 0$. Using \eqref{eq_boun_5}, \eqref{eq_boun_4}, the fact that $q$ is continuous, and making  the change of variables $y'=\frac{x'}{\lambda^{\alpha}}$, $y_n=\frac{x_n}{\lambda}$, we get 
\begin{equation}
\label{eq_boun_25}
\begin{aligned}
\lim_{\lambda\to 0}I_1= \lim_{\lambda\to 0}\int_{\R^{n-1}}\int_0^\infty q(\lambda^{\alpha}y',\lambda y_n)\eta^2(y', \lambda^{1-\alpha}y_n)e^{-2y_n} |g(\lambda^{\alpha}y', \lambda y_n)|^{1/2}dy'dy_n\\
=q(0)|g(0)|^{1/2}\int_0^{+\infty} e^{-2y_n}dy_n=\frac{1}{2}q(0).
\end{aligned}
\end{equation}
It follows from 
\eqref{eq_app_17}, \eqref{eq_app_18}, \eqref{eq_boun_25} that 
\[
\lim_{\lambda\to 0}\langle (\Lambda_{g, q}- \Lambda_{g,0}) (v_\lambda|_{\p M}), \overline{v_\lambda}|_{\p M} \rangle_{H^{-\frac{1}{2}}(\p M), H^{\frac{1}{2}}(\p M)}=\frac{1}{2}q(0)
\]
This completes the proof. 
\end{proof}

In the course of the proof of Theorem \ref{thm_app_boundary_reconstruction} we need the following result, see \cite[Section 54.2]{Eskin_book}.
\begin{thm}
\label{thm_reg_1}
Let $(M,g)$ be a compact smooth Riemannian manifold of dimension $n\ge 2$ with smooth boundary. Let $s>1/2$, $F\in H^{s-2}(M^{\emph{\text{int}}})$, $f\in H^{s-1/2}(\p M)$. Then the  Dirichlet problem
\[
\begin{cases}
-\Delta u=F & \text{in}\quad M^{\emph{\text{int}}},\\
u|_{\p M}=f,
\end{cases}
\]
has a unique solution $u\in H^s(M^{\emph{\text{int}}})$ and moreover, 
\[
\|u\|_{H^s(M^{\emph{\text{int}}})}\le C(\|F\|_{H^{s-2}(M^{\emph{\text{int}}})}+\|f\|_{H^{s-1/2}(\p M)}). 
\]
\end{thm}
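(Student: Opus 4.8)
The statement is classical elliptic boundary regularity in the Sobolev scale (the same content is in \cite[Section 54.2]{Eskin_book}), and I would prove it by the standard functional-analytic route: reduce to homogeneous Dirichlet data, establish the isomorphism property of the Dirichlet Laplacian at the integer levels $s=1,2$, and then cover all $s>1/2$ by interpolation and duality. First I would remove the boundary data: for $s>1/2$ the trace map $\gamma\colon H^s(M^{\text{int}})\to H^{s-1/2}(\p M)$ is bounded and surjective and admits a bounded linear right inverse (a lifting operator) $E\colon H^{s-1/2}(\p M)\to H^s(M^{\text{int}})$. Setting $w=Ef$ and $v=u-w$, the problem for $u$ is equivalent to $-\Delta v=F+\Delta w$ in $M^{\text{int}}$, $v|_{\p M}=0$, with $\|F+\Delta w\|_{H^{s-2}(M^{\text{int}})}\le C(\|F\|_{H^{s-2}(M^{\text{int}})}+\|f\|_{H^{s-1/2}(\p M)})$ since $\Delta\colon H^s(M^{\text{int}})\to H^{s-2}(M^{\text{int}})$ is bounded. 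Hence it suffices to show that for every $s>1/2$ the operator $-\Delta$ is an isomorphism of $D^s:=\{v\in H^s(M^{\text{int}})\colon v|_{\p M}=0\}$ onto $H^{s-2}(M^{\text{int}})$; uniqueness then follows automatically from injectivity.

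For the base cases, since $\p M\neq\emptyset$ the Poincar\'e inequality holds, so $-\Delta$ is coercive on $H^1_0(M^{\text{int}})$ and the Lax--Milgram theorem gives that $-\Delta\colon H^1_0(M^{\text{int}})\to H^{-1}(M^{\text{int}})$ is an isomorphism (the case $s=1$). The standard $H^2$ a priori estimate up to the boundary (difference quotients, or a pseudodifferential parametrix for the Dirichlet problem) then upgrades this to $-\Delta\colon H^2(M^{\text{int}})\cap H^1_0(M^{\text{int}})\to L^2(M^{\text{int}})$ being an isomorphism (the case $s=2$), and bootstrapping the same estimate covers all integers $s\ge 1$, as well as $s>2$ non-integer.

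For $s\in(1,2)$ I would interpolate between the $s=1$ and $s=2$ isomorphisms, using $[D^1,D^2]_\theta=D^{1+\theta}$ and $[H^{-1},L^2]_\theta=H^{\theta-1}$ (valid away from the exceptional value $s=3/2$, which is recovered by interpolating slightly shifted pairs; see \cite{Grubb_book}). For the remaining range $s\in(1/2,1)$ — the one actually used in Theorem~\ref{thm_app_boundary_reconstruction} — I would argue by transposition. Since $2-s\in(1,3/2)$, the previous step gives that $-\Delta\colon\widetilde H^{2-s}(M^{\text{int}})\to H^{-s}(M^{\text{int}})$ is an isomorphism, where $\widetilde H^{2-s}(M^{\text{int}})=D^{2-s}$ is the closure of $C^\infty_0(M^{\text{int}})$ in $H^{2-s}$. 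Taking Banach-space adjoints, invoking the dualities $(\widetilde H^{2-s}(M^{\text{int}}))'=H^{s-2}(M^{\text{int}})$ and $(H^{-s}(M^{\text{int}}))'=\widetilde H^{s}(M^{\text{int}})$, and using $\langle-\Delta u,v\rangle=\langle u,-\Delta v\rangle$ for $u,v\in C^\infty_0(M^{\text{int}})$, one identifies the adjoint with $-\Delta\colon\widetilde H^{s}(M^{\text{int}})\to H^{s-2}(M^{\text{int}})$, which is therefore an isomorphism; since $\widetilde H^{s}(M^{\text{int}})=D^s$ for $1/2<s<1$, this is exactly the desired claim. (An alternative, closer to \cite{Eskin_book}, would be to freeze the half-space model, solve the model Dirichlet problem by Fourier transform to get the isomorphism $D^s\to H^{s-2}$ for all $s$ at once, and patch with a partition of unity, absorbing lower-order errors; this route handles the half-integer exponents uniformly.)

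The genuinely delicate point — and the reason I would lean on \cite[Section 54.2]{Eskin_book} rather than reproducing every detail — is the low-regularity range $1/2<s<1$: there the solution need not lie in $H^1$, so the equation and the boundary condition only make sense in the transposition sense, and one must keep careful track of the distinction between the restriction spaces $H^t(M^{\text{int}})$ and the supported spaces $\widetilde H^t(M^{\text{int}})$ (they coincide only for $|t|<1/2$), as well as of the exceptional half-integer exponents where the naive interpolation identities for $H^s_0$-type spaces fail. All of this is standard Lions--Magenes theory.
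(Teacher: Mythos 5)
The paper offers no proof of this statement at all: it is quoted verbatim from \cite[Section 54.2]{Eskin_book}, so there is nothing internal to compare your argument against line by line. That said, your proposal is a correct and complete-in-outline proof, and it takes a different route from the cited source. Eskin's argument (which you mention parenthetically) freezes coefficients, solves the flat Dirichlet model problem explicitly by Fourier transform in the tangential variables --- which handles every exponent $s>1/2$, half-integers included, in one stroke --- and then patches with a partition of unity and absorbs the lower-order errors by a Neumann series/Fredholm argument. Your route instead goes through the variational base case $s=1$ (Lax--Milgram with Poincar\'e), the $H^2$ regularity estimate, complex interpolation for $1<s<2$, and transposition for $1/2<s<1$; this is the Lions--Magenes approach, and it is perfectly adequate here. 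Two small remarks. First, the range the paper actually uses is $s=\tfrac{1}{2}+\varepsilon$ with $\varepsilon=1/12$, i.e.\ precisely your transposition regime, and your identification of $D^s$ with the closure of $C^\infty_0(M^{\text{int}})$ is valid throughout $1/2<s<3/2$, so that step is sound; the dualities $(\widetilde H^{t}(M^{\text{int}}))'=H^{-t}(M^{\text{int}})$ you invoke do require $t$ to avoid the half-integers, which your exponents do. Second, the exceptional value $s=3/2$ in the interpolation step is a genuine nuisance (the interpolation space there is the Lions--Magenes $H^{3/2}_{00}$-type space, not $D^{3/2}$), and ``interpolating slightly shifted pairs'' is the standard fix but deserves one more sentence in a fully written-out version --- e.g.\ first produce the isomorphism on $D^{7/4}$ by interpolating the $s=1$ and $s=2$ cases, then interpolate $D^1$ with $D^{7/4}$ to reach $s=3/2$. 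Since $s=3/2$ is never needed in this paper, this is a cosmetic point. The only structural advantage of Eskin's route is that it avoids these exceptional-exponent gymnastics entirely; the advantage of yours is that it uses nothing beyond Lax--Milgram, standard elliptic regularity, and the interpolation facts already quoted from \cite{Grubb_book} elsewhere in the paper.
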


We also need a similar result for the Dirichlet problem for the Schr\"odinger equation. 
\begin{thm}
\label{thm_reg_2}
Let $(M,g)$ be a compact smooth Riemannian manifold of dimension $n\ge 2$ with smooth boundary, and $q\in L^\infty(M)$. Assume that $0$ is not a Dirichlet eigenvalue of $-\Delta_g+q$. Let $1/2<s<2$, $F\in H^{s-2}(M^{\emph{\text{int}}})$, $f\in H^{s-1/2}(\p M)$. Then  the Dirichlet problem
\[
\begin{cases}
(-\Delta +q) u=F & \text{in}\quad M^{\emph{\text{int}}},\\
u|_{\p M}=f,
\end{cases}
\]
has a unique solution $u\in H^s(M^{\emph{\text{int}}})$ and moreover, 
\[
\|u\|_{H^s(M^{\emph{\text{int}}})}\le C(\|F\|_{H^{s-2}(M^{\emph{\text{int}}})}+\|f\|_{H^{s-1/2}(\p M)}). 
\]
\end{thm}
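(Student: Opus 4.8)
The plan is to reduce to the pure Laplacian via Theorem~\ref{thm_reg_1} and then absorb the zeroth order term, using that the hypothesis on $0$ makes $-\Delta+q\colon (H^2\cap H^1_0)(M^{\text{int}})\to L^2(M)$ bijective with bounded inverse. The restriction $s<2$ enters only through the embedding $H^2(M^{\text{int}})\hookrightarrow H^s(M^{\text{int}})$, which is what permits the correction term to be measured in $H^s$; the hypothesis $s>1/2$ is needed both for Theorem~\ref{thm_reg_1} and so that the boundary trace is well defined.

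\textbf{Existence and the estimate.} First I would apply Theorem~\ref{thm_reg_1} twice, once with source $F$ and zero boundary data and once with zero source and boundary data $f$, and add the resulting solutions to obtain $u_0\in H^s(M^{\text{int}})$ with $-\Delta u_0=F$ in $M^{\text{int}}$, $u_0|_{\p M}=f$, and $\|u_0\|_{H^s(M^{\text{int}})}\le C(\|F\|_{H^{s-2}(M^{\text{int}})}+\|f\|_{H^{s-1/2}(\p M)})$. Writing $u=u_0+w$, the problem for $u$ is equivalent to $(-\Delta+q)w=-qu_0$ in $M^{\text{int}}$, $w|_{\p M}=0$. Since $s>1/2\ge 0$ we have $u_0\in L^2(M)$, hence $qu_0\in L^2(M)$ with $\|qu_0\|_{L^2}\le\|q\|_{L^\infty}\|u_0\|_{L^2}$; because $0$ is not a Dirichlet eigenvalue, there is a unique $w\in (H^2\cap H^1_0)(M^{\text{int}})$ solving this, with $\|w\|_{H^2}\le C\|qu_0\|_{L^2}\le C\|u_0\|_{H^s}$. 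As $s<2$, $H^2(M^{\text{int}})\hookrightarrow H^s(M^{\text{int}})$, so $u=u_0+w\in H^s(M^{\text{int}})$ solves the Schr\"odinger--Dirichlet problem and, combining the two bounds, satisfies the asserted estimate.

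\textbf{Uniqueness.} Suppose $u\in H^s(M^{\text{int}})$ satisfies $(-\Delta+q)u=0$ in $M^{\text{int}}$ and $u|_{\p M}=0$. Then $u\in L^2(M)$ gives $qu\in L^2(M)$, so $-\Delta u=-qu\in L^2(M)\subset H^{s-2}(M^{\text{int}})$. Let $\tilde u\in H^2(M^{\text{int}})$ be the solution of $-\Delta\tilde u=-qu$, $\tilde u|_{\p M}=0$ furnished by Theorem~\ref{thm_reg_1} applied with parameter $2$ (valid since $-qu\in L^2=H^0$ and $0\in H^{3/2}(\p M)$). Since $H^2\hookrightarrow H^s$, both $u$ and $\tilde u$ are $H^s$-solutions of the Dirichlet problem for $-\Delta$ with source $-qu\in H^{s-2}(M^{\text{int}})$ and zero boundary data, so the uniqueness clause of Theorem~\ref{thm_reg_1} (with parameter $s$) yields $u=\tilde u\in (H^2\cap H^1_0)(M^{\text{int}})$. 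Now $(-\Delta+q)u=0$ with $u\in H^2\cap H^1_0$ and the eigenvalue hypothesis force $u=0$.

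\textbf{Main obstacle.} There is no deep difficulty once Theorem~\ref{thm_reg_1} is available; the points requiring care are that multiplication by $q\in L^\infty$ is invoked only on $L^2$ functions (never on negative-order Sobolev spaces), which is what dispenses with any regularity on $q$, and the bootstrap in the uniqueness step from the possibly low regularity $H^s$ (with $s$ perhaps below $1$) up to $H^2\cap H^1_0$, where the embedding $H^2\hookrightarrow H^s$ and the uniqueness part of Theorem~\ref{thm_reg_1} must be combined in the correct order; this last maneuver is also exactly where the hypothesis $s<2$ is essential.
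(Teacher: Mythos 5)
Your proof is correct, but it follows a genuinely different route from the paper's. The paper argues via Fredholm theory: it sets $\mathcal{A}u=((-\Delta+q)u,u|_{\p M})$, $\mathcal{A}_0 u=(-\Delta u,u|_{\p M})$ and $Qu=(qu,0)$ as maps $H^s(M^{\text{int}})\to H^{s-2}(M^{\text{int}})\times H^{s-1/2}(\p M)$, notes that $\mathcal{A}_0$ is an isomorphism by Theorem~\ref{thm_reg_1}, and shows $Q$ is compact because multiplication by $q$ is bounded into $L^2(M)$ while the embedding $L^2(M)\subset H^{s-2}(M^{\text{int}})$ is compact precisely when $s<2$; hence $\mathcal{A}=\mathcal{A}_0+Q$ is Fredholm of index zero and the eigenvalue hypothesis upgrades injectivity to an isomorphism. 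You instead build the solution explicitly as $u=u_0+w$, with $u_0$ from Theorem~\ref{thm_reg_1} and $w\in(H^2\cap H^1_0)(M^{\text{int}})$ the correction furnished by the $L^2$-bijectivity of $-\Delta+q$; here $s<2$ enters through the embedding $H^2\hookrightarrow H^s$ rather than through compactness. Both arguments are sound and both confine multiplication by $q$ to $L^2$ functions. Your version has the merit of being constructive and of spelling out the elliptic bootstrap in the uniqueness step (an $H^s$ solution of the homogeneous problem is in fact in $H^2\cap H^1_0$), a point the paper's phrase ``as $0$ is not a Dirichlet eigenvalue, $\mathcal{A}$ is an isomorphism'' leaves implicit; the paper's version is shorter and dispenses with the two-step decomposition. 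Minor remark: your two separate applications of Theorem~\ref{thm_reg_1} (source only, then boundary data only) can be merged into one, since that theorem already handles inhomogeneous data in both slots.
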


\begin{proof}

Consider the operators 
\[
\mathcal{A}: H^s(M^{\text{int}})\to H^{s-2}(M^{\text{int}})\times H^{s-1/2}(\p M),\quad u\mapsto ((-\Delta +q) u, u|_{\p M}),
\]
\[
\mathcal{A}_0: H^s(M^{\text{int}})\to H^{s-2}(M^{\text{int}})\times H^{s-1/2}(\p M),\quad u\mapsto (-\Delta u, u|_{\p M}),
\]
and 
\[
Q: H^s(M^{\text{int}})\to H^{s-2}(M^{\text{int}})\times H^{s-1/2}(\p M),\quad u\mapsto (q u, 0). 
\]
It follows from \cite[Section 54.2]{Eskin_book}, cf. Theorem \ref{thm_reg_1}, that $\mathcal{A}_0$ is  an isomorphism. The operator $Q$ is compact, as the operator $H^s(M^{\text{int}}) \ni u\mapsto q u\in  H^{s-2}(M^{\text{int}})$ is compact. The later follows from the fact that the operator $H^s(M^{\text{int}}) \ni u\mapsto q u\in L^2(M)$ is continuous and the embedding $L^2(M)\subset H^{s-2}(M^{\text{int}})$ is compact provided $s<2$. Hence, $\mathcal{A}=\mathcal{A}_0+Q$ is  Fredholm of index zero,  and as $0$ is not a Dirichlet eigenvalue of $-\Delta_g+q$, $\mathcal{A}$ is an isomorphism. 
\end{proof}

\end{appendix}

\section*{Acknowledgements}

A.F. was supported by EPSRC grant EP/P01593X/1.
K.K. is very grateful to Fran\c{c}ois Monard for useful discussions and references. K.K. would also like to thank Giovanni Alessandrini for the reference \cite{Aless_deHoop_Gaburro_Sincich_2018}. The research of K.K. is partially supported by the National Science Foundation (DMS 1815922). L.O. was supported by EPSRC grants EP/R002207/1 and EP/P01593X/1. The research of G.U. is partially supported by NSF, a Walker Professorship at UW and a Si-Yuan Professorship at IAS, HKUST. Part of the work was supported by the NSF grant DMS-1440140 while K.K. and G.U.  were in residence at MSRI in Berkeley, California, during Fall 2019 semester.

\end{document}